\newtheorem{theorem}{Theorem}[section]
\newtheorem{lemma}[theorem]{Lemma}
\newtheorem{proposition}[theorem]{Proposition}
\newtheorem{definition}[theorem]{Definition}
\newtheorem{remark}[theorem]{Remark}
\newcounter{paraga}[section]
\renewcommand{\theparaga}{{\bf\arabic{paraga}.}}
\newcommand{\paraga}{\medskip \addtocounter{paraga}{1} 
\noindent{\theparaga\ } }
\begin{document}

\bibliographystyle{amsalpha}

\def\MP{\,{<\hspace{-.5em}\cdot}\,}
\def\SP{\,{>\hspace{-.3em}\cdot}\,}
\def\PM{\,{\cdot\hspace{-.3em}<}\,}
\def\PS{\,{\cdot\hspace{-.3em}>}\,}
\def\EP{\,{=\hspace{-.2em}\cdot}\,}
\def\PP{\,{+\hspace{-.1em}\cdot}\,}
\def\PE{\,{\cdot\hspace{-.2em}=}\,}
\def\N{\mathbb N}
\def\C{\mathbb C}
\def\Q{\mathbb Q}
\def\R{\mathbb R}
\def\T{\mathbb T}
\def\A{\mathbb A}
\def\Z{\mathbb Z}
\def\demi{\frac{1}{2}}

\begin{titlepage}
\author{Abed Bounemoura~\footnote{Laboratoire Mathématiques d'Orsay et Institut Mathématiques de Jussieu}}
\title{\LARGE{\textbf{Generic super-exponential stability of invariant tori in Hamiltonian systems.}}}
\end{titlepage}

\maketitle

\begin{abstract}
In this article, we consider solutions starting close to some linearly stable invariant tori in an analytic Hamiltonian system and we prove results of stability for a super-exponentially long interval of time, under generic conditions. The proof combines classical Birkhoff normal forms and a new method to obtain generic Nekhoroshev's estimates developed by the author and L. Niederman in another paper. We will mainly focus on the neighbourhood of elliptic fixed points, since with our approach the other cases are completely similar.
\end{abstract}
  
\section{Introduction and main results}

In this paper, we are interested in the stability properties of some linearly stable invariant tori in analytic Hamiltonian systems. Let us begin by the case of elliptic fixed points.

\paraga As the problem is local, it is enough to consider a Hamiltonian $H$ defined and analytic on an open neighbourhood of $0$ in $\R^{2n}$, having the origin as a fixed point. Up to an irrelevant additive constant and expanding the Hamiltonian as a power series at the origin, we can write
\[ H(z)=H_2(z)+V(z), \]
where $z$ is sufficiently close to $0$ in $\R^{2n}$, $H_2$ is the quadratic part of $H$ at $0$ and $V(z)=O(||z||^3)$. Recall that the fixed point is said to be elliptic if the spectrum of the linearized system is purely imaginary. Then it has the form $\{\pm i\alpha_1, \dots, \pm i\alpha_n\}$, for some vector $\alpha=(\alpha_1, \dots, \alpha_n)\in\R^n$ which is called the normal (or characteristic) frequency. Due to the symplectic character of the equations, such equilibria are the only linearly stable fixed points. Now we assume that the components of $\alpha$ are all distinct so that we can make a symplectic linear change of variables that diagonalizes the quadratic part:
\[ H(z)=\sum_{i=1}^{n}\frac{\alpha_i}{2}(z_i^2+z_{n+i}^2)+V(z)=\alpha.\tilde{I}+V(z), \]
where $\tilde{I}=\tilde{I}(z)$ is the vector of ``formal actions", that is
\[ \tilde{I}(z)=\frac{1}{2}(z_1^2+z_{n+1}^2, \dots, z_n^2+z_{2n}^2) \in \R^n. \]
Assuming the components of $\alpha$ are all of the same sign, it is easy to see that $H$ is a Lyapunov function so the fixed point is stable. But in the general case, one has to study the influence of the higher order terms $V(z)$, and we will explain how it can be done using classical perturbation theory.

\paraga Let us first note that, given a solution $z(t)$ of $H$, if $\tilde{I}(t)=\tilde{I}(z(t))$ then 
\[ |\tilde{I}(t)|_1=\sum_{i=1}^{n}|\tilde{I}_i(t)| \] 
is (up to a factor one-half) the square of the Euclidean distance of $z(t)$ to the origin, so that Lyapunov stability can be proved if $|\tilde{I}(t)-\tilde{I}(0)|_1$ does not vary much for all times.

Now in order to study the dynamics on a small neighbourhood of size $\rho$ around the origin in $\R^{2n}$, it is more convenient to change coordinates by performing the standard scalings
\[ z \longmapsto \rho z, \quad H \longmapsto \rho^{-2}H, \]
to have a Hamiltonian defined on a fixed neighbourhood of zero in $\R^{2n}$. Then, by analyticity, we extend the resulting Hamiltonian to a holomorphic function on some complex neighbourhood of zero in $\C^{2n}$. So eventually we will consider the following setting: we define the Euclidean ball in $\C^{2n}$
\[ \mathcal{D}_s=\{z\in\C^{2n}\;|\;||z||<s\}\] 
of radius $s$ around the origin, and if $\mathcal{A}_s$ is the space of holomorphic functions on $\mathcal{D}_s$ which are real valued for real arguments, endowed with its usual supremum norm $|\,.\,|_s$, we consider
\begin{equation}\label{Ham1}
\begin{cases}
H(z)=\alpha.\tilde{I}+f(z) \\
H \in \mathcal{A}_s, |f|_s < \rho.
\end{cases}\tag{$A$} 
\end{equation} 
Let us emphasize that the small parameter $\rho$, which was originally describing the size of the neighbourhood of $0$, now describes the size of the ``perturbation" $f$ on a neighbourhood of \emph{fixed size} $s$. Without loss of generality, we may assume $s>3$.

\paraga Probably the main tool to investigate stability properties is the construction of normal forms using averaging methods, and in this case these are the so-called Birkhoff normal forms. For an integer $m\geq 1$, assuming $\alpha$ is non-resonant up to order $2m$, that is  
\[ k.\alpha \neq 0, \quad k\in\Z^n, \; 0<|k|_1\leq 2m,  \]
there exists an analytic symplectic transformation $\Phi_m$ close to identity such that $H\circ\Phi_m$ is in Birkhoff normal form up to order $2m$, that is
\[ H\circ\Phi_m(z)=h_m(\tilde{I})+f_m(z), \]
where $h_m$ is a polynomial of degree at most $m$ in the $\tilde{I}$ variables, and the remainder $f_m$ is roughly of order $\rho^{2m-1}$ (since before the scaling $f_m(z)$ is of order $\Vert z\Vert^{2m+1}$, see \cite{Bir66}, or \cite{Dou88} for a more recent exposition). The polynomials $h_m$ are uniquely defined once $\alpha$ is fixed, and are usually called the Birkhoff invariants. Therefore the transformed Hamiltonian is the sum of an integrable part $h_m$, for which the origin is trivially stable, since $\tilde{I}(t)$ is constant for all times, and a smaller perturbation $f_m$. Moreover, if $\alpha$ is non-resonant up to any order, we can even define a formal symplectic transformation $\Phi_\infty$ and a formal power series $h_\infty=\sum_{k\geq 1}h^k$, with $h_m=\sum_{k=1}^{m}h^k$, such that 
\begin{equation*}
H\circ\Phi_\infty(z)=h_\infty(\tilde{I}).
\end{equation*} 
In general the series $h_\infty$ is divergent (this is a result of Siegel) and the convergence properties of the transformation $\Phi_\infty$ are even more subtle (see \cite{PM03}). However, Birkhoff normal forms at finite order are still very useful, not only because the ``perturbation" $f_m$ is made smaller, but also because the ``integrable" part $h_m$, for $m\geq 2$, is now non-linear and other classical techniques from perturbation theory can be used. 

\paraga First, in the case $n=2$, a complete result of stability follows from KAM theory. Indeed, if the frequency $\alpha\in\R^2$ is non-resonant up to order $4$, the Birkhoff normal form reads 
\[ H(z)=\alpha.\tilde{I}+\beta \tilde{I}.\tilde{I}+f_2(z),\]
with $\beta$ a symmetric matrix of size $n=2$ and $f_2$ a small perturbation. This time we consider the non-linear part $h_2(\tilde{I})=\alpha.\tilde{I}+\beta \tilde{I}.\tilde{I}$ as the integrable system, and if it is isoenergetically non degenerate, the persistence of two-dimensional tori in each energy level close to the fixed point implies Lyapunov stability (see \cite{AKN97}, \cite{Arn63b}, or \cite{Arn61} for other results).  

However, for $n\geq 3$, it is believed that ``generic" elliptic fixed points are unstable, although this is totally unclear for the moment (see \cite{DLC83}, \cite{Dou88} and \cite{KMV04}).    

Therefore, for $n\geq 3$, stability results under general assumptions can only concern finite but hopefully long intervals of time, and this is the content of the paper. More precisely, we will prove, under generic assumptions and provided $\rho$ is sufficiently small, that for all initial conditions the variation $|\tilde{I}(t)-\tilde{I}(0)|_1$ is of order $\rho$ for $t\in T(\rho)$, where $T(\rho)$ is an interval of time of order $\exp\left(\exp(\rho^{-1})\right)$ (see Theorem~\ref{thelli} for a precise formulation). The interpretation in the original coordinates is the following: if a solution starts in a sufficiently small neighbourhood of the origin, it stays in some larger neighbourhood during an interval of time which is super-exponentially long with respect to the inverse of the initial distance to the origin. But first, let us describe previously known results on exponential stability, where there were basically two strategies.

\paraga In a first approach, one assumes a Diophantine condition on $\alpha$, that is there exist $\gamma>0$ and $\tau> n-1$ such that
\begin{equation*}\label{Dioph}
|k.\alpha| \geq \gamma |k|_{1}^{-\tau}, \quad k\in\Z^n\setminus\{0\},  
\end{equation*}
but no conditions on the Birkhoff invariants. From the point of view of perturbation theory, the linear part is considered as the integrable system. In particular, $\alpha$ is non-resonant up to any order, hence we can perform any finite number of Birkhoff normalizations, and since we have a control on the small divisors, we can precisely estimate the size of the remainder $f_m$ (in terms of $\gamma$ and $\tau$). The usual trick is then to optimize the choice of $m$ as a function of $\rho$ in order to obtain an exponentially small remainder with respect to $\rho^{-1}$. Therefore the exponential stability is immediately read from the normal form, and this requires only an assumption on the linear part (see \cite{GDFGS} or \cite{DG96b}). The above Diophantine condition has full Lebesgue measure. However, as we will see later, the threshold of the perturbation and the constants of stability are very sensitive to the Diophantine properties of $\alpha$, in particular the small parameter $\gamma$. 

\paraga The second approach is fundamentally different, and it does not rely on the arithmetic properties of $\alpha$. Here, one just assumes that $\alpha$ is non-resonant up to order $4$, so that the Hamiltonian reduces to   
\[ H(z)=\alpha.\tilde{I}+\beta \tilde{I}.\tilde{I}+f_2(z).\]
In this case $h_2(\tilde{I})=\alpha.\tilde{I}+\beta \tilde{I}.\tilde{I}$ is considered as the integrable system ($\beta$ being a symmetric matrix of size $n$). Now we suppose that the non-linear part is convex, which is equivalent to $\beta$ being sign definite. Under those assumptions, it was predicted and partially proved by Lochak (\cite{Loc92} and \cite{Loc95}), and completely proved independently by Niederman (\cite{Nie98}) and Fassò, Guzzo and Benettin (\cite{BFG98} and \cite{BFG98b}) that exponential stability holds. Their proofs are based on the implementation of Nekhoroshev's estimates in Cartesian coordinates, but they are radically different: the first one uses Lochak's method of periodic averagings and simultaneous Diophantine approximations, while the second one is based on Nekhoroshev's original mechanism. The proof of Niederman was later clarified by Pöschel (\cite{Pos99a}). However, the method of Lochak was restricted to the convex case, and it was not clear how to remove this hypothesis to have a result valid in a more general context.

\paraga In this paper, using the method of \cite{BN09} we are able to replace the convexity condition by a generic assumption. Then, combining both Birkhoff theory and Nekhoroshev theory as in \cite{MG95}, we will obtain the following result.

\begin{theorem}\label{thelli}
Suppose $H$ is as in~(\ref{Ham1}), with $\alpha$ non-resonant up to any order. Then under a generic condition $(G)$ on $h_\infty$, there exist positive constants $a,a',c_1,c_2$ and $\rho_0$ such that for $\rho\leq\rho_0$, every solution $z(t)$ of $H$ with $|\tilde{I}(0)|_1<1$ satisfies
\[ |\tilde{I}(t)-\tilde{I}(0)|_1< c_1\rho,\quad |t| < \exp\left(\rho^{-a'}\exp(c_2a'\rho^{-a})\right).  \]
\end{theorem}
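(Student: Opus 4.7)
The plan is to combine Birkhoff normalization with the generic Nekhoroshev estimates of \cite{BN09}, along the lines of \cite{MG95} where the convexity hypothesis is replaced by the generic one. Specifically, for a well-chosen order $m=m(\rho)$, I would first bring $H$ into Birkhoff normal form
\[ H\circ\Phi_m(z)=h_m(\tilde{I})+f_m(z), \]
with $h_m$ a polynomial in the formal actions and $f_m$ a remainder that decreases polynomially in $\rho$ as long as $m$ stays bounded, and exponentially in $\rho^{-1}$ if $m$ is optimized (here using that the genericity condition $(G)$ should encode a Diophantine-type hypothesis on $\alpha$). This reduces the problem to a non-linear integrable Hamiltonian $h_m$ with an exponentially small perturbation $f_m$, on which Nekhoroshev theory can be applied.

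Second, I would verify that $(G)$ ensures that for every sufficiently large $m$, the truncated Birkhoff Hamiltonian $h_m$ fits the framework of \cite{BN09}: namely that $\nabla h_m$, viewed as a map on a suitable domain in action space, satisfies the generic (steepness-like) assumption used to derive Cartesian Nekhoroshev estimates without convexity. Provided $\rho$ is small enough (so that the lower-order Birkhoff invariants determine the geometry), this would allow me to invoke the theorem of \cite{BN09} for $H\circ\Phi_m$ viewed as a perturbation of $h_m$, and obtain that $|\tilde{I}(t)-\tilde{I}(0)|_1<c_1\rho$ for a time of order $\exp\!\bigl(|f_m|^{-a'}\bigr)$, where $a'$ is the Nekhoroshev exponent furnished by the generic hypothesis.

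Third, the optimization step: if $|f_m|_{s'}\lesssim \exp(-c\,\rho^{-a})$ is produced at the optimal Birkhoff order $m(\rho)$, then the stability time from the preceding step becomes
\[ \exp\!\bigl(\exp(c\,\rho^{-a})^{a'}\bigr)\gtrsim \exp\!\bigl(\rho^{-a'}\exp(c_2 a'\rho^{-a})\bigr), \]
which is exactly the bound claimed. Finally, since $\Phi_m$ is $\rho$-close to the identity in $\mathcal{D}_{s'}$, the variation of $\tilde{I}$ in the original coordinates is controlled by the variation in the normalized coordinates up to an additional $O(\rho)$ term, so the estimate survives pull-back. The initial bound $|\tilde{I}(0)|_1<1$ is used only to keep the orbit inside the domain where $\Phi_m$ is well-defined and quantitatively close to identity.

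The main obstacle, to my mind, is the second step: formulating condition $(G)$ on the formal series $h_\infty$ so that it simultaneously (i) guarantees the Diophantine control of small divisors required to push the Birkhoff procedure to the order that yields an exponentially small remainder, and (ii) transfers, uniformly in $m$ and $\rho$, to the non-degeneracy hypothesis of \cite{BN09} on the truncation $h_m$. Both aspects are ``generic'' but on different objects (the frequency $\alpha$ and the higher Birkhoff invariants respectively), and packaging them into a single condition $(G)$ that is itself generic, together with checking that $h_m$ inherits the \cite{BN09} hypothesis from $h_\infty$ with constants independent of $m$ (for $\rho$ small), is where the bulk of the technical work should lie.
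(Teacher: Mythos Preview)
Your proposal is correct and follows exactly the paper's strategy: Birkhoff's exponential estimate (Proposition~\ref{B1}) via the Diophantine part of $(G)$, then the Cartesian-coordinate Nekhoroshev theorem (the paper's Theorem~\ref{N1}, which adapts \cite{BN09} to the $z$-variables near the fixed point) via the SDM part of $(G)$, followed by the substitution $\varepsilon=\rho\exp(-(\gamma\rho^{-1})^{a})$ and pull-back through $\Phi_m$. Your anticipated main obstacle is resolved more simply than you expect: condition $(G)$ is \emph{defined} as the countable conjunction of $(G_1)$ ($\alpha$ is $(\gamma,\tau)$-Diophantine) and $(G_m)$ ($h_m\in SDM_{\gamma'}^{\tau'}$) for every $m\geq 2$, so there is no inheritance from $h_\infty$ to check, and genericity is obtained because prevalence is stable under countable intersection (Appendix~\ref{generic}, Theorem~\ref{genericseries}).
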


Denoting $h_\infty=\sum_{k\geq 1}h^k$ and $h_m=\sum_{k=1}^{m}h^k$, let us explain our generic condition $(G)$ on the formal power series $h_\infty$. In fact
\[ (G)=\bigcup_{m\in \N^{*}}(G_m) \]
consists in countably many conditions, where $(G_m)$ is a condition on $h_m$. The first condition $(G_1)$ requires that $h_1(I)=\alpha.I$ with a $(\gamma,\tau)$-Diophantine vector $\alpha$. The other conditions $(G_m)$, for $m\geq 2$, are that each polynomial function $h_m$ belongs to a special class of functions called $SDM_{\gamma'}^{\tau'}$ which was introduced in \cite{BN09} (SDM stands for ``Simultaneous Diophantine Morse" functions, see Appendix~\ref{generic} for a definition). In this appendix we will show that each condition $(G_m)$ is of full Lebesgue measure in the finite dimensional space of polynomials of degree $m$ with $n$ variables, assuming $\tau$ and $\tau'$ are large enough. This is well-known for $m=1$, it will be elementary for $m=2$ (see Theorem~\ref{genericquadratic}) but for $m>2$ it requires the quantitative Morse-Sard theory of Yomdin (\cite{Yom83}, \cite{YC04}, see Theorem~\ref{propreva} in the appendix). Let us point out that this would have not been possible if we had assumed $h_m$, for $m\geq2$, to be steep in the sense of Nekhoroshev, as polynomials are generically steep only if their degrees are sufficiently large with respect to the number of degrees of freedom (see \cite{LM88}). 

Our condition $(G)$ on the formal series $h_\infty$ is therefore of ``full Lebesgue measure at any order". From an abstract point of view, this condition defines a prevalent set in the space of formal power series, where prevalence is an analog of the notion of full Lebesgue measure in the context of infinite dimensional vector spaces. This will be proved in Appendix~\ref{generic}, Theorem~\ref{genericseries}. In Theorem~\ref{thelli}, we can choose the exponents
\[ a=(1+\tau)^{-1}, \; a'=3^{-1}(2(n+1)\tau' )^{-n}, \]
and our threshold $\rho_0$ depends in particular on $\gamma$ and $\gamma'$. Moreover our constants $c_1$ and $c_2$ also depend on $\gamma$ but not on $\gamma'$, and we shall be a little more precise later on.
 
As we have already explained, the proof is based on a combination of Birkhoff normalizations up to an exponentially small remainder, which are well-known (a statement is recalled in Proposition~\ref{B1} below), and Nekhoroshev's estimates for a generic integrable Hamiltonian near an elliptic fixed point (Theorem~\ref{N1} below). The latter result is new, and it will follow rather easily from the new approach of Nekhoroshev theory in a generic case taken in \cite{BN09}.

\paraga As a direct consequence of our Nekhoroshev estimates near an elliptic fixed point, we can derive an exponential stability result more general than those obtained in \cite{BFG98} and \cite{Nie98}. Like in those papers, we only require $\alpha$ to be non-resonant up to order $4$, and after the scalings 
\[ z \longmapsto \rho z, \quad H \longmapsto \rho^{-4}H, \quad \alpha \longmapsto \rho^{2}\alpha, \]
we consider
\begin{equation}\label{Ham2}
\begin{cases}
H(z)=\alpha.\tilde{I}+\beta \tilde{I}.\tilde{I}+f(z) \\
H \in \mathcal{A}_s, |f|_s < \rho. 
\end{cases}\tag{$B$} 
\end{equation} 
However, instead of assuming that $\beta$ is sign definite, our result applies to Lebesgue almost all symmetric matrices $\beta$ without any condition on $\alpha$. Let $S_n(\R)$ be the space of symmetric matrices of size $n$ with real entries. 

\begin{theorem}\label{thelli2}
Suppose $H$ is as in~(\ref{Ham2}). For Lebesgue almost all $\beta\in S_n(\R)$, there exist positive constants $a',b'$ and $\rho_0$ such that, for $\rho\leq\rho_0$, every solution $z(t)$ of $H$ with $|\tilde{I}(0)|_1<1$ satisfies
\[|\tilde{I}(t)-\tilde{I}(0)|_1<n(n^2+1)\rho^{-b'},\quad |t|<\exp(\rho^{-a'}).\]
\end{theorem}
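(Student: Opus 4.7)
The plan is to apply the generic Nekhoroshev estimate near an elliptic fixed point (Theorem~\ref{N1}) directly to the Hamiltonian in~(\ref{Ham2}), viewing $h_2(\tilde{I}) = \alpha\cdot\tilde{I} + \beta\,\tilde{I}\cdot\tilde{I}$ as the integrable part and $f$ as a perturbation of size $\rho$ on the fixed-size domain $\mathcal{D}_s$. The key observation is that, in contrast with Theorem~\ref{thelli}, no iterated Birkhoff normalization is needed here: the Hamiltonian is already in Birkhoff normal form up to order $4$ (which is exactly why non-resonance up to order $4$ on $\alpha$ is the only arithmetic input), so one passes immediately to the Nekhoroshev step.

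The only nontrivial hypothesis of Theorem~\ref{N1} to verify is that the integrable part $h_2$ belongs to the class $SDM_{\gamma'}^{\tau'}$ of \cite{BN09}. This is precisely what the genericity on $\beta$ provides. For $h_2$ of degree $2$, the $SDM$ property reduces to a non-degeneracy condition on the affine gradient map $\nabla h_2(I) = \alpha + 2\beta I$; since translation of the gradient map by the constant vector $\alpha$ does not affect the $SDM$ condition, the latter depends only on $\beta$. Theorem~\ref{genericquadratic} then gives, in full Lebesgue measure on $S_n(\R)$ (uniformly in the fixed $\alpha$), a constant $\gamma' = \gamma'(\beta) > 0$ and a fixed $\tau'$ such that $h_2 \in SDM_{\gamma'}^{\tau'}$.

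With this in hand, Theorem~\ref{N1} applied to $(h_2, f)$ produces constants $a', b', \rho_0 > 0$ and an exponential stability estimate of the form $|\tilde{I}(t) - \tilde{I}(0)|_1 < C \rho^{-b'}$ for $|t| < \exp(\rho^{-a'})$. The explicit prefactor $n(n^2+1)$ and the exponents $a', b'$ (presumably $a' = 3^{-1}(2(n+1)\tau')^{-n}$, analogous to Theorem~\ref{thelli}) are simply read off from the statement of Theorem~\ref{N1} once one substitutes a polynomial of degree $2$ in $n$ variables for the integrable Hamiltonian; this is routine bookkeeping of the constants and exponents produced by the generic Nekhoroshev mechanism of \cite{BN09}.

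The main (mild) obstacle is ensuring that the full-measure statement on $\beta$ is genuinely independent of $\alpha$, so that the genericity is phrased on $S_n(\R)$ rather than on the whole space of quadratic polynomials; this rests on the translation-invariance of the $SDM$ condition under shifts of the gradient, which is built into its definition. Beyond that, the proof is a direct invocation of Theorem~\ref{N1}: no small divisor estimate on $\alpha$ is needed because all resonance phenomena in the perturbation are now controlled through the $SDM$ geometry of the integrable part $h_2$, exactly as in the generic Nekhoroshev framework.
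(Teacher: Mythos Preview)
Your proposal is correct and matches the paper's proof: invoke Theorem~\ref{genericquadratic} to put $h_2\in SDM^{\tau'}$ for Lebesgue almost every $\beta$ (independently of $\alpha$), then apply Theorem~\ref{N1} directly with $\varepsilon=\rho$, the extra factor $n$ in the bound coming from passing from the sup norm to the $\ell^1$ norm. One small imprecision worth correcting: the SDM condition is \emph{not} in general invariant under constant shifts of the gradient; the $\alpha$-independence in Theorem~\ref{genericquadratic} comes instead from the fact that its proof verifies the Hessian alternative of Remark~\ref{remsdm} everywhere, and the Hessian of $h_2$ is $2\beta$.
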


The above theorem is a direct consequence of Theorem~\ref{N1} below, provided that $h_2(\tilde{I})=\alpha.\tilde{I}+\beta \tilde{I}.\tilde{I}$ belongs to $SDM_{\gamma'}^{\tau'}$. But we will prove in Appendix~\ref{generic} that this happens for almost all symmetric matrices $\beta$, independently of $\alpha$ (see Theorem~\ref{genericquadratic}). Once again, let us also mention that this result is not possible in the steep case, as the quadratic part $h^2(\tilde{I})=\beta\tilde{I}.\tilde{I}$ is steep if and only if $\beta$ is sign definite. In the above statement one can choose the exponents
\[ a'=b'=3^{-1}(2(n+1)\tau' )^{-n}, \]
and the threshold $\rho_0$ depends on $\gamma'$. 

\paraga Let us add that in order to avoid useless expressions, we will only keep track of the small parameters $\rho$, $\gamma$ and $\gamma'$ and replace any other positive constants by a dot ($\cdot$) when it is convenient.

Moreover, in this text we shall use various norms for vectors $v\in\R^n$ or $v\in \C^n$: $|\,.\,|$ will be the supremum norm, $|\,.\,|_1$ the $\ell_{1}$-norm and $\Vert\,.\,\Vert$ the Euclidean (or Hermitian) norm. 

\paraga Let us now describe the plan of the paper. Section~\ref{sNE2} is devoted to the proof of Theorem~\ref{thelli} and Theorem~\ref{thelli2}. In~\ref{sNE21}, we give a statement of the Birkhoff normal form up to an exponentially small remainder. In~\ref{sNE22}, we will explain how Nekhoroshev's estimates obtained in \cite{BN09} generalize in the neighbourhood of elliptic fixed points, and how they imply Theorem~\ref{thelli2}. In~\ref{sNE23}, we will show how Theorem~\ref{thelli} follows from a simple combination of Birkhoff's estimates and Nekhoroshev's estimates, provided our assumption on $h_\infty$ is satisfied. Then, in section~\ref{sNE3}, we will state similar results for invariant Lagrangian tori and more generally for invariant linearly stable isotropic reducible tori. Finally, an appendix is devoted to our genericity assumptions.

\section{Proof of Theorem~\ref{thelli} and Theorem~\ref{thelli2}}\label{sNE2}

In the sequel, we recall that we will use the ``formal" actions  
\[ \tilde{I}=\tilde{I}(z)=\frac{1}{2}(z_1^2+z_{n+1}^2, \dots, z_n^2+z_{2n}^2) \in \R^n, \]
but one has to remember that these are nothing but notations for expressions in $z\in\R^{2n}$. Moreover, we will also need to use complex coordinates for the normal forms, and, abusing notations, we will also denote them by $z\in\C^{2n}$, but of course the solutions we consider are real.

\subsection{Birkhoff's estimates} \label{sNE21}

Here we consider a Hamiltonian as in~(\ref{Ham1}), and we assume that the vector $\alpha$ is $(\gamma,\tau)$-Diophantine. In this context, the following result is classical.

\begin{proposition}\label{B1}
Under the previous assumptions, if $\rho\MP \gamma$, then there exist an integer $m=m(\rho)$ and an analytic symplectic transformation
\[ \Phi_m: \mathcal{D}_{3s/4} \rightarrow D_{s} \]
such that  
\[ H\circ\Phi_m(z)=h_m(\tilde{I})+f_m(z) \]
is in Birkhoff normal form, with a remainder $f_m$ satisfying the estimate
\[ |f_m|_{3s/4} \MP \rho \exp\left(-(\gamma\rho^{-1})^a\right), \quad a=(1+\tau)^{-1}. \]
Moreover, $|\Phi_m-\mathrm{Id}|_{3s/4} \MP \gamma^{-1}\rho$ and the image of $\Phi_m$ contains the domain $\mathcal{D}_{s/2}$. 
\end{proposition}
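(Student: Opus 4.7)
The plan is to follow the standard Lie-series iterative Birkhoff normalization scheme, exactly as in \cite{GDFGS} or \cite{DG96b}, and then optimize the number of steps against the Diophantine loss to produce an exponentially small remainder.

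First I would diagonalize the Poisson action of $\alpha\cdot\tilde I$ by passing to the complex symplectic coordinates
\[ w_j=\tfrac{1}{\sqrt 2}(z_j+iz_{n+j}),\quad \bar w_j=\tfrac{1}{\sqrt 2}(z_j-iz_{n+j}), \]
so that $\tilde I_j=w_j\bar w_j$ and the monomial $w^p\bar w^q$ is an eigenvector of $\{\alpha\cdot\tilde I,\cdot\}$ with eigenvalue $i(p-q)\cdot\alpha$. The kernel of this operator (the resonant monomials) is exactly $\{p=q\}$, i.e.\ the polynomial functions of $\tilde I$, while the image admits a bounded right inverse on each homogeneous component thanks to the Diophantine bound: on monomials of total degree $k$, $|(p-q)\cdot\alpha|\geq\gamma k^{-\tau}$.

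Next I would iterate Birkhoff steps. Decompose $f=\sum_{k\geq 3}f^{(k)}$ by homogeneous degree (after the scaling $f^{(k)}$ is $O(\rho)$ on $\mathcal D_s$), and inductively construct generating Hamiltonians $\chi_3,\chi_4,\dots,\chi_K$ by solving the homological equation
\[ \{\chi_k,\alpha\cdot\tilde I\}+f^{(k)}_{\mathrm{nr}}=0, \]
where $f^{(k)}_{\mathrm{nr}}$ is the non-resonant part of the degree-$k$ component of the current Hamiltonian. The Diophantine estimate gives $|\chi_k|_{s_k}\leq (k^\tau/\gamma)|f^{(k)}_{\mathrm{nr}}|_{s_k}$. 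Compose the time-one flows $\Phi_{\chi_k}^1$ on a sequence of shrinking Euclidean balls $\mathcal D_{s_k}$ with $s=s_3>\dots>s_{K+1}=3s/4$ and $s_k-s_{k+1}\simeq s/K$. Standard Cauchy estimates on each step control the new remainder by the previous one times a factor of order $k^{\tau+1}\rho/(\gamma s)$; accumulating $K$ such factors yields
\[ |f_K|_{3s/4}\leq C\rho\,\bigl(C'K^{\tau+1}\rho/\gamma\bigr)^K \]
for a fixed constant $C'$, together with $|\Phi_K-\mathrm{Id}|_{3s/4}\leq C''\gamma^{-1}\rho$ (dominated by the first, cubic step $\chi_3$).

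Then I would optimize the truncation order: setting $K=\lfloor e^{-1}(C')^{-1/(\tau+1)}(\gamma/\rho)^{1/(\tau+1)}\rfloor$ converts the geometric bound above into the desired exponentially small estimate
\[ |f_K|_{3s/4}\leq C\rho\exp\!\bigl(-(\gamma\rho^{-1})^{a}\bigr),\qquad a=(1+\tau)^{-1}, \]
and the hypothesis $\rho\MP\gamma$ ensures $K\geq 1$ so that the scheme actually runs; define $m=m(\rho)=K$. The bound $|\Phi_m-\mathrm{Id}|_{3s/4}\MP\gamma^{-1}\rho$ then gives, again for $\rho$ small compared to $\gamma$, the inclusion $\mathcal D_{s/2}\subset\Phi_m(\mathcal D_{3s/4})$ by a standard contraction/Lipschitz inversion argument on $\Phi_m$.

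The main obstacle is purely bookkeeping: keeping the constants in the Cauchy-type inductive estimate free of $K!$-type factors that would spoil the optimization. The usual way around this is to choose the geometric sequence $s_k$ so that each Cauchy loss contributes a factor of $K$ rather than $k$, which leads precisely to the power $(\gamma/\rho)^{1/(1+\tau)}$ in the exponent after optimization. Everything else is routine, and this is why the statement is presented as classical.
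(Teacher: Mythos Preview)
Your sketch is correct and is precisely the classical argument the paper has in mind: the paper does not give its own proof of this proposition but simply refers to \cite{GDFGS} and \cite{DG96b}, and the Lie-series Birkhoff normalization with Diophantine small-divisor control and optimization of the truncation order $K\sim(\gamma\rho^{-1})^{1/(1+\tau)}$ that you outline is exactly what is done there. The only cosmetic point is that in the paper $m$ denotes the degree of $h_m$ in the action variables $\tilde I$ (so the truncation order in $z$ is $2m$ rather than $m$), but this does not affect the estimates.
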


For a proof, we refer to \cite{GDFGS} and \cite{DG96b}. The analogous result for invariant Lagrangian tori can be found in \cite{PW94} or \cite{Pos93}, and in \cite{JV97} in the more general case of isotropic and reducible linearly stable invariant tori. 

In the above proposition, one has to choose the integer $m=m(\rho)$ of order $(\gamma \rho^{-1})^{(\tau+1)^{-1}}$. So letting $\rho$ go to zero, the degree of the polynomial $h_m$ goes to infinity, and this explains why in the proof of Theorem~\ref{thelli} we require a condition on the whole formal power series $h_{\infty}$.

\subsection{Nekhoroshev's estimates and proof of Theorem~\ref{thelli2}}\label{sNE22}

\paraga Here we consider the Hamiltonian 
\begin{equation}\label{HamN}
\begin{cases}
H(z)=h(\tilde{I})+f(z) \\
H \in \mathcal{A}_s, \; h\in SDM_{\gamma'}^{\tau'},  \; |f|_s<\varepsilon 
\end{cases} \tag{$E$}
\end{equation}
and we have assumed that, on the real part of the domain, the derivatives up to order $3$ of $h$ are uniformly bounded by some constant $M>1$. The definition of the set $SDM_{\gamma'}^{\tau'}$ is recalled in Appendix~\ref{generic}. 

\begin{theorem}\label{N1}
Let $H$ be as in (\ref{HamN}), with  $\tau'\geq 2$ and $\gamma'\leq 1$. Then there exists $\varepsilon_0$ such that if $\varepsilon\leq\varepsilon_0$, for every solution $z(t)$ with $|\tilde{I}(0)|<1$, we have
\[ |\tilde{I}(t)-\tilde{I}(0)| < (n^2+1)\varepsilon^{b'}, \quad |t| < \exp(\varepsilon^{-a'}), \]
with the exponents $a'=b'=3^{-1}(2(n+1)\tau')^{-n}$.
\end{theorem}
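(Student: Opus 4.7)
The plan is to follow the Cartesian coordinate Nekhoroshev strategy of \cite{Nie98} and \cite{Pos99a}, but to replace convexity of the integrable part $h$ by the simultaneous Diophantine Morse condition, using the geometric framework developed in \cite{BN09}. The proof splits as usual into an analytic part (resonant normal forms with exponentially small remainders) and a geometric part (confinement of the formal actions).

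For the analytic part, I fix a reference point $z_{*}\in\mathcal{D}_{s/2}$ with $|\tilde{I}(z_{*})|\leq 1$, set $\omega_{*}=\nabla h(\tilde{I}(z_{*}))$, and introduce the complex symplectic coordinates $\zeta_{j}=(z_{j}+iz_{n+j})/\sqrt{2}$ in which $\tilde{I}_{j}=\zeta_{j}\bar\zeta_{j}$ and monomials $\zeta^{p}\bar\zeta^{q}$ diagonalize the flow of $h_{2}=\omega_{*}\!\cdot\!\tilde{I}$ into phases $(p-q)\!\cdot\!\omega_{*}$. For a truncation order $K$ and a resonance module $\Lambda=\{k\in\mathbb{Z}^{n}:|k|\leq K,\,|k\!\cdot\!\omega_{*}|\leq\alpha\}$, a standard averaging scheme (as in \cite{Pos99a}) produces an analytic symplectic map close to the identity on a complex ball around $z_{*}$ conjugating $H$ to $h(\tilde{I})+g(\zeta,\bar\zeta)+f_{*}(\zeta,\bar\zeta)$, with $g$ a polynomial in the resonant monomials and $|f_{*}|\lesssim\varepsilon\exp(-cK)$. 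The small denominator control is obtained directly from the $(K,\alpha)$-non-resonance of $\omega_{*}$ modulo $\Lambda$, and the Cauchy estimates are entirely analogous to the Lagrangian torus case in \cite{PW94}.

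For the geometric part, the resonant normal form implies that $\tilde{I}(t)$ stays close, on an exponentially long time scale, to the ``resonant plane'' $\tilde{I}_{*}+\mathbb{R}\Lambda$. In the convex case of \cite{Nie98, Pos99a}, strict convexity plus conservation of energy traps the motion along this plane; in the present generic setting, I instead invoke the SDM property of $h$: by the very definition of $SDM_{\gamma'}^{\tau'}$ recalled in Appendix~\ref{generic}, the restriction of $h$ to the affine subspace $\tilde{I}_{*}+\mathbb{R}\Lambda$ is a Morse function with a quantitative lower bound, in terms of $(\gamma',\tau')$, on the gradient and the Hessian of the restriction on the relevant scales. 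Following the geometric argument of \cite{BN09}, this quantitative non-degeneracy yields a polynomial trapping of the resonant component of $\tilde{I}(t)-\tilde{I}_{*}$, and then energy conservation plus the non-resonant estimate provided by the normal form bounds the full drift. Optimizing $K$ as a function of $\varepsilon$ as in \cite{BN09} produces the stated exponents $a'=b'=3^{-1}(2(n+1)\tau')^{-n}$; the factor $(n^{2}+1)$ in the drift comes from summing the local resonant contributions over a covering of $\{|\tilde{I}|<1\}$ by balls of the appropriate size.

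The principal obstacle is that $\tilde{I}$ is not a genuine action coordinate: the map $z\mapsto\tilde{I}(z)$ is singular on each hyperplane $\{\tilde{I}_{j}=0\}$, so neither the analytic normal form nor the geometric argument of \cite{BN09}, which is stated in action angle variables, applies directly on all of $\mathcal{D}_{s}$. I resolve this by working in the complex variables $\zeta$ throughout: the analytic averaging step above is insensitive to the vanishing of $\tilde{I}_{j}$, while the geometric step is handled by distinguishing, for each subset $J\subset\{1,\dots,n\}$, the open stratum $\{\tilde{I}_{j}>0 \text{ for } j\in J,\ \tilde{I}_{j}=0 \text{ for } j\notin J\}$, on which one has genuine action angle coordinates of dimension $|J|$ and where the SDM property is inherited by the restriction of $h$ to the corresponding face. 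Patching the resulting local stability estimates across the strata by induction on $n$, together with a standard covering and continuity argument, gives the global bound of the theorem.
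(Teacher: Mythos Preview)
Your proposal takes a genuinely different route from the paper and has a gap at the crucial point.

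The paper does \emph{not} build a single $(K,\Lambda)$-resonant normal form in $(\zeta,\bar\zeta)$ monomials. Following \cite{BN09}, it uses Lochak's scheme: one approximates $\nabla h(\tilde I)$ by a finite sequence of \emph{periodic} frequencies $\omega_1,\dots,\omega_n$ and performs successive averagings along the linear flows of $l_j(\tilde I)=\omega_j\!\cdot\!\tilde I$. The essential observation, stated explicitly in the paper, is that these linear flows and the associated averaging operators are well defined in the Cartesian variable $z$ on all of $\mathcal D_s$; no action--angle chart is introduced anywhere, and the locus $\{\tilde I_j=0\}$ plays no special role. The geometric part of \cite{BN09} then carries over verbatim because it only consumes the output of this hierarchy of periodic-averaging normal forms together with the $SDM_{\gamma'}^{\tau'}$ hypothesis on $h$.

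Your analytic step is in itself reasonable, but it does not plug directly into the geometry of \cite{BN09}, which is organised around the nested domains $\mathcal D_{r_j,s_j}(\omega_j)$ and the successive commutation relations $\{g_j,l_i\}=0$, not around a single resonance module $\Lambda$ at each base point.

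The real gap is your handling of the singularity of $z\mapsto\tilde I(z)$. Stratifying by the faces $\{\tilde I_j=0,\ j\notin J\}$, passing to genuine action--angle coordinates on each open stratum, and ``patching by induction on $n$'' is precisely the difficulty that left Nekhoroshev's conjecture near elliptic points open for two decades. On the stratum indexed by $J$ the action--angle chart degenerates as $\min_{j\in J}\tilde I_j\to 0$, so every analytic estimate in those coordinates blows up on a neighbourhood of the lower faces; a solution wandering close to such a face is not uniformly covered by either adjacent chart, and no continuity argument repairs this. Moreover, your claim that the $SDM$ property of $h$ is ``inherited by the restriction of $h$ to the corresponding face'' is unjustified: Definition~\ref{sdm} quantifies over \emph{all} rational subspaces $\Lambda\in G^{L}(n,k)$ of the ambient $\mathbb R^n$, and restricting $h$ to a coordinate face does not yield an $SDM$ function of the remaining variables with comparable $(\gamma',\tau')$. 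Without a coordinate-free normal form --- which is exactly what the periodic-averaging method provides --- the patching step is a genuine hole, not a routine covering argument.
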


Theorem~\ref{thelli2} is now an immediate consequence of this result and Theorem~\ref{genericquadratic} (see Appendix~\ref{generic}). 

\begin{proof}[Proof of Theorem~\ref{thelli2}]
From Theorem~\ref{genericquadratic}, we know that for almost all $\beta\in S_n(\R)$, the Hamiltonian $h(\tilde{I})=\alpha.\tilde{I}+\beta \tilde{I}.\tilde{I}$ belongs to $SDM^{\tau'}(B)$ with $\tau'>n^2+1$. So we can apply Theorem~\ref{N1}: for every solution $z(t)$ with $|\tilde{I}(0)|<1$, we have
\[ |\tilde{I}(t)-\tilde{I}(0)| < (n^2+1)\varepsilon^{b'}, \quad |t| < \exp(\varepsilon^{-a'}), \]
with the exponents $a'=b'=3^{-1}(2(n+1)\tau')^{-n}$. In particular, this gives  
\[|\tilde{I}(t)-\tilde{I}(0)|_1<n(n^2+1)\varepsilon^{b'},\quad |t|<\exp(\varepsilon^{-a'}), \]
for every solution $z(t)$ with $|\tilde{I}(0)|_1<1$.
\end{proof}

\paraga The statement of Theorem~\ref{N1} is the analogue of the main statement of \cite{BN09}. However, the difference is that here we are using Cartesian coordinates and not action-angle coordinates ({\it i.e.} symplectic polar coordinates), and we \emph{cannot} use the latter since they become singular at the origin. So we \emph{cannot} apply directly the main result of \cite{BN09}. This is not a serious issue when applying KAM theory in this context (see \cite{Arn63b} or \cite{Pos82} for example), but this becomes problematic in Nekhoroshev theory (see \cite{Loc92} or \cite{Loc95} for detailed explanations). This result was only conjectured by Nekhoroshev in \cite{Nek77}, and it took a long time before it could be solved in the convex case (\cite{Nie98},\cite{BFG98}). Here we are able to solve this problem in the generic case. The reason is that even though we cannot apply the result of \cite{BN09}, we can use exactly the same approach, since the method of averagings along unperturbed periodic flows is intrinsic, {\it i.e.} independent of the choice of coordinates, a fact that was first used implicitly in \cite{Nie98} and made completely clear in \cite{Pos99a}. 

The proof of such estimates usually requires an analytic part, which boils down to the construction of suitable normal forms, and a geometric part. The geometric part of \cite{BN09} goes exactly the same way, so in the sequel we will restrict ourselves to indicating the very slight modifications in the construction of the normal forms.

\paraga Consider linearly independent periodic vectors $\omega_1,\dots,\omega_n$ of $\R^n$, with periods $(T_1,\dots,T_n)$, that is 
\[ T_j=\inf\{t>0 \; | \; t\omega_i\in\Z^n\}, \quad 1\leq j \leq n. \]
Define the domains 
\[ \mathcal{D}_{r_j,s_j}(\omega_j)=\{z \in \mathcal{D}_{s_j} \; | \; |\nabla h(\tilde{I})-\omega_j|\MP r_j\}, \quad 1\leq j \leq n, \] 
given two sequences $(r_1,\dots,r_n)$ and $(s_1,\dots,s_n)$ (recall that $\mathcal{D}_{s_j}$ is the complex ball of radius $s_j$). We will denote by $l_j$ the linear Hamiltonian with frequency $\omega_j$, that is $l_j(\tilde{I})=\omega_j.\tilde{I}$.

The supremum norm of a function $f$ defined on $\mathcal{D}_{r_j,s_j}(\omega_j)$ will be simply denoted by 
\[ |f|_{r_j,s_j}=|f|_{\mathcal{D}_{r_j,s_j}(\omega_j)}, \]
and for a Hamiltonian vector field $X_f$, we will write 
\[ |X_f|_{r_j,s_j}=\max_{1 \leq i \leq 2n}|\partial_{z_i}f|_{r_j,s_j}. \]
To obtain normal forms on these domains we will make the following assumptions $(A_j)$, for $j\in\{1,\dots,n\}$, where $(A_1)$ is
\begin{equation} 
\begin{cases}
mT_1\varepsilon\PM r_1, \; mT_1r_1\PM s_1, \; 0<r_1 \MP s_1, \\
\mathcal{D}_{r_1,s_1}(\omega_1)\neq \emptyset , \; s_1 \PM s, 
\end{cases} \tag{$A_1$}
\end{equation} 
and for $j\in\{2,\dots,n\}$, ($A_j)$ is
\begin{equation} 
\begin{cases}
mT_j\varepsilon\PM r_j, \; mT_jr_j\PM s_j, \; 0<r_j \MP s_j,\\
\mathcal{D}_{r_j,s_j}(\omega_j)\neq \emptyset,\; \mathcal{D}_{r_j,s_j}(\omega_{j})\subseteq \mathcal{D}_{2r_{j-1}/3,2s_{j-1}/3}(\omega_{j-1}).  
\end{cases} \tag{$A_j$}
\end{equation}

With these assumptions, one can prove the following proposition.

\begin{proposition} 
Consider $H=h+f$ on the domain $\mathcal{D}_{r_1,s_1}(\omega_1)$, with $|X_f|_{r_1,s_1} < \varepsilon$, and let $j\in\{1,\dots,n\}$. For any $i\in\{1,\dots,j\}$, if $(A_i)$ is satisfied,  then there exists an analytic symplectic transformation 
\[ \Psi_j: \mathcal{D}_{2r_j/3,2s_j/3}(\omega_j) \rightarrow \mathcal{D}_{r_1,s_1}(\omega_1)\] 
such that
\begin{equation*}
H \circ \Psi_j=h+g_j+f_j,
\end{equation*}
with $\{g_j,l_i\}=0$ for $i \in \{1, \dots, j\}$, and the estimates
\begin{equation*}
|X_{g_j}|_{2r_j/3,2s_j/3} \MP \varepsilon, \quad |X_{f_j}|_{2r_j/3,2s_j/3} \MP e^{-m} \varepsilon. 
\end{equation*}
Moreover, we have $\Psi_j=\Phi_1 \circ \cdots \circ \Phi_j$ with
\[ \Phi_i: \mathcal{D}_{2r_i/3,2s_i/3}(\omega_i) \rightarrow \mathcal{D}_{r_i,s_i}(\omega_i)\] 
such that $|\Phi_i-\mathrm{Id}|_{2r_i/3,2s_i/3} \PM r_i$.
\end{proposition}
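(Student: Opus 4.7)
The plan is to proceed by induction on $j$, using the classical periodic averaging lemma underlying Lochak's, Niederman's and P\"oschel's approach to Nekhoroshev estimates, which is the analytic engine of \cite{BN09}. The only novelty here is that we work in Cartesian coordinates, so the ``integrable'' part $h(\tilde I)$ is a polynomial in the formal actions rather than an action-variable function on $\T^n\times B^n$; the construction is intrinsic --- it is built from averaging along the unperturbed periodic flows of the $l_i$ --- so the estimates are insensitive to this change of coordinates, as noted in the discussion preceding the proposition.

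For the base case $j=1$, I would construct $\Phi_1$ as a product $\exp(X_{\chi_m})\circ\cdots\circ\exp(X_{\chi_1})$ of $m$ elementary Lie transforms on nested subdomains of $\mathcal{D}_{r_1,s_1}(\omega_1)$. At step $k$ one has $h+g^{(k)}+f^{(k)}$ with $\{g^{(k)},l_1\}=0$, and $\chi_k$ is chosen to solve the cohomological equation $\{l_1,\chi_k\}=f^{(k)}-A_1f^{(k)}$, where $A_1f=T_1^{-1}\int_0^{T_1}f\circ\phi_{l_1}^t\,dt$ is the averaging operator along the $T_1$-periodic flow of $l_1$. Because that flow is periodic, $\chi_k$ is given by an explicit integral formula with no small divisors, losing only a factor of $T_1$. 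The new perturbation picks up the drift term $\{h-l_1,\chi_k\}$, whose size is controlled by $|\nabla(h-l_1)|\MP r_1$ on $\mathcal{D}_{r_1,s_1}(\omega_1)$. The smallness assumptions in $(A_1)$ --- namely $mT_1\varepsilon\MP r_1$ and $mT_1r_1\MP s_1$ --- are exactly what is needed for the geometric series of iterated transforms to converge after $m$ steps on the target domain $\mathcal{D}_{2r_1/3,2s_1/3}(\omega_1)$, yielding $|X_{f_1}|_{2r_1/3,2s_1/3}\MP e^{-m}\varepsilon$, $|X_{g_1}|_{2r_1/3,2s_1/3}\MP\varepsilon$ and $|\Phi_1-\mathrm{Id}|_{2r_1/3,2s_1/3}\PM r_1$.

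For the inductive step I would assume $\Psi_{j-1}$ already constructed on $\mathcal{D}_{2r_{j-1}/3,2s_{j-1}/3}(\omega_{j-1})$, producing $H\circ\Psi_{j-1}=h+g_{j-1}+f_{j-1}$ with $\{g_{j-1},l_i\}=0$ for $i<j$ and the estimates of the proposition at level $j-1$. By $(A_j)$ one may restrict to the smaller domain $\mathcal{D}_{r_j,s_j}(\omega_j)$ and repeat the same $m$-step procedure with $l_j$ playing the role previously played by $l_1$, producing $\Phi_j$ and a decomposition $h+g_{j-1}+\tilde g_j+f_j$ with $\{\tilde g_j,l_j\}=0$. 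Setting $g_j:=g_{j-1}+\tilde g_j$ and $\Psi_j:=\Psi_{j-1}\circ\Phi_j$ then gives the desired composition. The main delicate point, and the step I expect to be the hardest to write carefully, is that $g_j$ must commute with \emph{all} of $l_1,\ldots,l_{j-1}$ as well, not only with $l_j$. This relies on the fact that $l_1,\ldots,l_n$ are functions of $\tilde I$ alone, hence Poisson-commute, so their flows commute and the averaging operators $A_1,\ldots,A_n$ commute pairwise. Choosing each inner generating function $\chi_k^{(j)}$ as the iterated average of the current non-resonant part ensures that the normal-form piece produced at stage $j$ remains invariant under $A_1,\ldots,A_{j-1}$, so $\{\tilde g_j,l_i\}=0$ for all $i\leq j$. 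This bookkeeping is carried out in full in \cite{Pos99a} and \cite{BN09}, and exactly the same argument applies here; the only adaptation required is that the drift term $\{h-l_j,\chi_k^{(j)}\}$ is now controlled by $r_j$ rather than $r_1$, which is precisely what $(A_j)$ encodes.
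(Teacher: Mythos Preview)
Your approach matches the paper's, which simply refers to \cite{BN09} and \cite{Pos99a} and notes that the proof goes by induction on $j$ via successive periodic averagings. There is, however, one genuine imprecision in your inductive step that would cause the argument to fail as written.

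At stage $j$ you must average the \emph{entire} current perturbation $g_{j-1}+f_{j-1}$ (of size $\MP\varepsilon$) along the flow of $l_j$, not merely the exponentially small remainder $f_{j-1}$. Your decomposition $h+g_{j-1}+\tilde g_j+f_j$ with $g_j:=g_{j-1}+\tilde g_j$ and $\{\tilde g_j,l_j\}=0$ would force $\{g_{j-1},l_j\}=0$ in order to conclude $\{g_j,l_j\}=0$, and that is not part of the inductive hypothesis. The correct bookkeeping treats $g_{j-1}+f_{j-1}$ as the perturbation to which the $m$-step Lie procedure is applied; this yields a new resonant part $g_j$ with $\{g_j,l_j\}=0$, $|X_{g_j}|\MP\varepsilon$, and a remainder $f_j$ with $|X_{f_j}|\MP e^{-m}\varepsilon$, which are exactly the estimates claimed. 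The preservation of $\{g_j,l_i\}=0$ for $i<j$ then follows just as you outline: since $g_{j-1}+f_{j-1}$ already commutes with $l_1,\dots,l_{j-1}$ (the $f_{j-1}$ part does too, by the same inductive mechanism one level down), every generating function $\chi_k^{(j)}$---an integral along the flow of $l_j$ of a function commuting with $l_1,\dots,l_{j-1}$---also commutes with each $l_i$, and Jacobi's identity combined with $\{h,l_i\}=\{l_j,l_i\}=0$ propagates this through all $m$ inner iterations. With this correction your sketch is precisely the argument of \cite{BN09}, Appendix~A, transported to Cartesian coordinates.
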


The proof is completely analogous to the corresponding one in~\cite{BN09}, Appendix A, to which we refer for more details. In fact, here the proof is even simpler since one does not have to use ``weighted" norms for vector fields. It relies on a finite composition of averagings along the periodic flows generated by $l_j$, $j\in\{1,\dots,n\}$. The case $j=1$ is due to Pöschel (\cite{Pos99a}) and, for $j>1$, the proof goes by induction using our assumption $(A_j)$, $j\in\{1,\dots,n\}$.

Once we have this normal form, the rest of the proof in~\cite{BN09} goes exactly the same way: every solution $z(t)$ of $H$ with $|\tilde{I}(0)|<1$ satisfies
\[ |\tilde{I}(t)-\tilde{I}(0)| < (n^2+1)\varepsilon^{b'}, \quad |t| < \exp(\varepsilon^{-a'}), \]
provided that $\varepsilon\leq\varepsilon_0$, with $\varepsilon_0$ depending on $n,s,M,\gamma'$ and $\tau'$ and with the exponents
\[ a'=b'=3^{-1}(2(n+1)\tau')^{-n}.\] 

\subsection{Proof of Theorem~\ref{thelli}}\label{sNE23}

Now we can finally prove Theorem~\ref{thelli}, by using successively Birkhoff's estimates and Nekhoroshev's estimates.

\begin{proof}[Proof of Theorem~\ref{thelli}]
Let $H$ be as in~(\ref{Ham1}), first assume that $\rho<\rho_1$ with $\rho_1 \EP \gamma$ so that using our assumption $(G_1)$ we can apply Proposition~\ref{B1}: there exist an integer $m=m(\rho)$ and an analytic symplectic transformation
\[ \Phi_m: \mathcal{D}_{3s/4} \rightarrow \mathcal{D}_s \]
such that 
\[ H\circ\Phi_m(z)=h_m(\tilde{I})+f_m(z) \]
is in Birkhoff normal form, with a remainder $f_m$ satisfying the estimate
\[ |f_m|_{3s/4} \MP \rho \exp\left(-(\gamma\rho^{-1})^{a^{-1}}\right), \quad a=\tau+1. \]
So let $H_m=H\circ\Phi_m$, and set 
\[\varepsilon = \rho \exp\left(-(\gamma\rho^{-1})^{a^{-1}}\right).\] 
By our assumption $(G_m)$, for $m\geq 2$, the Hamiltonian $H_m$, which is defined on the domain $\mathcal{D}_{3s/4},$ satisfies~(\ref{HamN}). Now assume that $\varepsilon \MP \varepsilon_0$, which gives another threshold $\rho<\rho_{2}$, with $\rho_2$ also depending on $\gamma'$, and our final threshold is $\rho_0=\min\{\rho_1,\rho_2\}$. We can apply Theorem~\ref{N1}: every solution $z_m(t)$ of $H_m$ with $|\tilde{I}_m(0)|_1<1$ satisfies
\[ |\tilde{I}_m(t)-\tilde{I}_m(0)|_1 \MP \varepsilon^{b'}, \quad |t| < \exp(\varepsilon^{-a'}), \]
with 
\[ a'=b'=3^{-1}(2(n+1)\tau')^{-n}.\] 
Recalling the definition of $\varepsilon$, this gives
\[ |\tilde{I}_m(t)-\tilde{I}_m(0)|_1 \MP \rho^{b'} \exp\left(-b'(\gamma\rho^{-1})^a\right), \quad |t| < \exp\left(\rho^{-a'}  \exp\left(a'(\gamma\rho^{-1})^a\right)\right). \]
However, one has
\[ \rho^{b'} \exp\left(-b'(\gamma\rho^{-1})^a\right) < \gamma^{-1}\rho, \]
and as $\Phi_m$ satisfies $|\Phi_m-\mathrm{Id}|_{3s/4}\MP\gamma^{-1}\rho$, and its image contains the domain $\mathcal{D}_{s/2}$, a standard argument gives
\[ |I(t)-I(0)|_1 \MP \gamma^{-1}\rho, \quad |t| < \exp\left(\rho^{-a'}  \exp\left(a'(\gamma\rho^{-1})^a\right)\right), \] 
for any solution $z(t)$ of $H$ with $|\tilde{I}(0)|_1<1$. 
\end{proof}

\section{Further results and comments}\label{sNE3}

As we have already mentioned, the idea of combining both Birkhoff theory and Nekhoroshev theory to obtain super-exponential stability was discovered by Morbidelli and Giorgilli (\cite{MG95}) in the context of Lagrangian Diophantine tori. Evidently, we can also state results in this context.

\paraga Consider a Hamiltonian system on a manifold which carries an invariant Lagrangian Diophantine torus, that is an invariant sub-manifold $\mathcal{T}$ which is diffeomorphic to the standard torus $\T^n$, and whose induced flow is conjugated to a linear flow on $\T^n$ with a Diophantine frequency. Since the torus is Lagrangian, one can locally reduce the situation to a Hamiltonian defined on $T^*\T^n=\T^n\times\R^n$, having $\T^n\times\{0\}$ as the invariant torus. Moreover, by invariance and transitivity of the torus, in the coordinates $(\theta,I)\in\T^n\times\R^n$ we can write
\begin{equation*}
H(\theta,I)=\omega.I+F(\theta,I),
\end{equation*} 
where $\omega$ is a $(\gamma,\tau)$-Diophantine vector and $F(\theta,I)=O(|I|^2)$. After some scalings one is led to consider
\begin{equation}\label{Ham3}
\begin{cases}
H(\theta,I)=\omega.I+f(\theta,I) \\
H \in \mathcal{A}_{s}, |f|_{s} < \rho
\end{cases}\tag{$C$} 
\end{equation} 
where $\mathcal{A}_{s}$ is the space of holomorphic functions on the domain 
\[ \mathcal{D}_{s}=\{(\theta,I)\in(\C^n/\Z^n)\times \C^{n} \; | \; |\mathcal{I}(\theta)|<s,\;|I|< s\}, \] 
with $\mathcal{I}(\theta)$ the imaginary part of $\theta$. Here one can also define polynomials $h_m$ and a formal power series $h_\infty$ and we can state the following result.

\begin{theorem}\label{thlag}
Suppose $H$ is as in~(\ref{Ham3}). Then, under a generic condition on $h_\infty$, there exist positive constants $a,a',c_1,c_2$ and $\rho_0$ such that for $\rho\leq\rho_0$, every solution $(\theta(t),I(t))$ of $H$ with $|I(0)|<1$ satisfies
\[ |I(t)-I(0)|< c_1\rho,\quad |t| < \exp\left(\rho^{-a'}\exp(c_2a'\rho^{-a})\right).  \]
\end{theorem}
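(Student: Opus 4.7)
The proof proposal follows very closely the scheme developed in Section~\ref{sNE2} for the elliptic case, and is in some respects simpler, because we are already in action--angle coordinates and therefore no singularity issue is present.

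First I would invoke the classical Birkhoff normal form in the neighbourhood of a Diophantine Lagrangian torus, which is the direct analogue of Proposition~\ref{B1}. Such a statement is available in \cite{PW94}, \cite{Pos93} or \cite{JV97}. Under the Diophantine assumption $(G_1)$ on $\omega$, and provided $\rho$ is sufficiently small with respect to $\gamma$, one obtains an integer $m=m(\rho)\sim (\gamma\rho^{-1})^{(\tau+1)^{-1}}$ and an analytic symplectic transformation $\Phi_m\colon\mathcal{D}_{3s/4}\to\mathcal{D}_s$, close to the identity in the sense $|\Phi_m-\mathrm{Id}|_{3s/4}\MP\gamma^{-1}\rho$, such that
\[ H\circ\Phi_m(\theta,I)=h_m(I)+f_m(\theta,I), \]
where $h_m$ is a polynomial of degree at most $m$ in the action variables and the new remainder satisfies
\[ |f_m|_{3s/4}\MP\rho\exp\!\left(-(\gamma\rho^{-1})^a\right),\qquad a=(\tau+1)^{-1}. \]
Set $\varepsilon=\rho\exp(-(\gamma\rho^{-1})^a)$.

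Next I would apply a Nekhoroshev result for generic integrable Hamiltonians in action--angle variables to $H\circ\Phi_m=h_m+f_m$. This is precisely the setting of \cite{BN09}: one needs $h_m\in SDM_{\gamma'}^{\tau'}$ on the ball of radius $1$, which is granted by our generic assumption $(G_m)$, $m\geq 2$. Unlike in the elliptic case, here the main theorem of \cite{BN09} can be used as a black box, with \emph{no} modification of the geometric part nor of the normal form construction, because $(\theta,I)\in \T^n\times\R^n$ are genuine action--angle coordinates on the whole relevant domain. We obtain, provided $\varepsilon\leq\varepsilon_0(n,s,M,\gamma',\tau')$, that every solution $(\theta_m(t),I_m(t))$ of $H\circ\Phi_m$ with $|I_m(0)|<1$ satisfies
\[ |I_m(t)-I_m(0)|\MP\varepsilon^{b'},\qquad |t|<\exp(\varepsilon^{-a'}), \]
with $a'=b'=3^{-1}(2(n+1)\tau')^{-n}$.

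Finally I would substitute back the definition of $\varepsilon$. The bound on the drift becomes
\[ |I_m(t)-I_m(0)|\MP\rho^{b'}\exp\!\left(-b'(\gamma\rho^{-1})^a\right)<\gamma^{-1}\rho, \]
while the time exponent turns into the super-exponential $\exp(\rho^{-a'}\exp(a'(\gamma\rho^{-1})^a))$. Since $\Phi_m$ is $\gamma^{-1}\rho$-close to the identity on $\mathcal{D}_{3s/4}$ and its image contains $\mathcal{D}_{s/2}$, the usual pull-back argument transports these estimates to any real solution $(\theta(t),I(t))$ of $H$ with $|I(0)|<1$, with constants $c_1,c_2$ depending only on $\gamma$ (and on $s$, $n$) and a threshold $\rho_0$ depending also on $\gamma'$. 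The generic condition on $h_\infty$ is exactly the Lagrangian analogue of $(G)$ in Theorem~\ref{thelli}, namely $(G_1)$ that $\omega$ is Diophantine together with $(G_m)$ that $h_m\in SDM_{\gamma'}^{\tau'}$ for $m\geq 2$; its prevalence follows from the same arguments as in Appendix~\ref{generic}. The only point that requires a minor check — and that I would regard as the main (mild) obstacle — is to verify that the quantitative form of the Birkhoff theorem in \cite{PW94,Pos93,JV97} yields a remainder $f_m$ in a fixed complex neighbourhood of size comparable to $s$ on which the $SDM$ hypothesis and the bound on $\|h_m\|_{C^3}$ can be jointly propagated, so that Theorem~\ref{N1}'s proof applies without further modification.
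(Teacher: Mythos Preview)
Your proposal is correct and follows exactly the approach the paper itself sketches: the paper states that the proof of Theorem~\ref{thlag} is ``completely analogous'' to that of Theorem~\ref{thelli}, and ``even simpler since we are using action-angle coordinates, and therefore we can immediately use Nekhoroshev's estimates obtained in~\cite{BN09} without any modifications.'' Your write-up is in fact more detailed than the paper's own discussion, and your identification of the one mild technical point (matching the domains and $C^3$ bounds from the quantitative Birkhoff theorem with the hypotheses of the Nekhoroshev result) is a fair remark that the paper leaves implicit.
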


The assumption on $h_\infty$ and the values of the constants $a$ and $a'$ are the same as in Theorem~\ref{thelli}, as the proof is completely analogous. In fact, it is even simpler since we are using action-angle coordinates, and therefore we can immediately use Nekhoroshev's estimates obtained in~\cite{BN09} without any modifications. 

However, it is important to note that one cannot obtain a statement similar to Theorem~\ref{thelli2}, simply because in this case a non-resonant condition up to a finite order does not allow to build the corresponding Birkhoff normal form. 

If we compare this result with~\cite{MG95}, our assumption is generic and we do not require any convexity. But of course the price to pay is that one has to consider the full set of Birkhoff invariants.

\paraga As a final result, one can also obtain similar estimates for the general case of a linearly stable lower-dimensional torus, under the common assumptions of isotropicity and reducibility (which were automatic for a fixed point or a Lagrangian torus). In that context, it is enough to consider a Hamiltonian defined in $\T^k \times \R^k \times \R^{2l}$ (by isotropicity), of the form
\[ H(\theta,I,z)= \omega.I+\frac{1}{2}Bz.z+F(\theta,I,z).\]
Here $B$ is a symmetric matrix (constant by reducibility) such that $J_{2l}B$ has a purely imaginary spectrum ($J_{2l}$ being the canonical symplectic structure of $\R^{2l}$), and $F(\theta,I,z)=O(|I|^{2},||z||^3)$. In those coordinates, the invariant torus is simply given by $I=0$, $z=0$, and this generalizes both the case of an elliptic fixed point (where the directions $(\theta,I)$ are absent) and of a Lagrangian invariant torus (where the directions $z$ are absent). If the spectrum $\{\pm i\alpha_1, \dots, \pm i\alpha_l\}$ of $J_{2l}B$ is simple, one can assume further that
\[ H(\theta,I,z)= \omega.I+\alpha.\tilde{I}+F(\theta,I,z),\]
where $\tilde{I}$ are the ``formal actions" associated to the $z$ variables. Therefore, after some appropriate scalings we can consider
\begin{equation}\label{Ham4}
\begin{cases}
H(\theta,I,z)=\omega.I+\alpha.\tilde{I}+F(\theta,I,z) \\
H \in \mathcal{A}_{s}, |f|_{s} < \rho
\end{cases}\tag{$D$} 
\end{equation} 
where $\mathcal{A}_{s}$ is the space of holomorphic functions on the domain 
\[ \mathcal{D}_{s}=\{(\theta,I,z)\in(\C^k/\Z^k)\times \C^{k}\times\C^{2l} \; | \; |\mathcal{I}(\theta)|<s,\;|I|< s,\;||z||<s\}. \] 
Under a suitable Diophantine condition on the vector $(\omega,\alpha)\in\R^{k+l}$, one can define polynomials $h_m$ and a formal series $h_\infty$ depending on $J=(I,\tilde{I})$. Birkhoff's exponential estimates in this more difficult situation have been obtained in \cite{JV97}. Regarding Nekhoroshev's estimates for a generic integrable Hamiltonian which depends both on actions and formal actions, they can be easily obtained by obvious modifications of our method. Therefore we can state the following result.

\begin{theorem}\label{thellilag}
Suppose $H$ is as in~(\ref{Ham4}). Then under a generic condition on $h_\infty$, there exist positive constants $a,a',c_1,c_2$ and $\rho_0$ such that for $\rho\leq\rho_0$, every solution $(\theta(t),I(t),z(t))$ of $H$ with $|J(0)|<1$ satisfies
\[ |J(t)-J(0)|< c_1\rho,\quad |t| < \exp\left(\rho^{-a'}\exp(c_2a'\rho^{-a})\right).  \]
\end{theorem}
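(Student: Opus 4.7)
The plan is to follow the same two-step strategy as in Theorems~\ref{thelli} and~\ref{thlag}: first apply a Birkhoff normalization up to an exponentially small remainder, and then apply a suitable Nekhoroshev estimate to the resulting quasi-integrable Hamiltonian. The generic condition on $h_\infty$ is of the same type as the one described after Theorem~\ref{thelli}: we require the frequency vector $(\omega,\alpha)\in\R^{k+l}$ to be $(\gamma,\tau)$-Diophantine, and for each $m\geq 2$, we require the polynomial $h_m$ in the variable $J=(I,\tilde{I})\in\R^{k+l}$ to belong to the class $SDM_{\gamma'}^{\tau'}$.

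First I would invoke the Birkhoff normal form theorem of Jorba--Villanueva~\cite{JV97} for isotropic reducible tori: under the Diophantine assumption on $(\omega,\alpha)$, there exist an integer $m=m(\rho)$ of order $(\gamma\rho^{-1})^{(\tau+1)^{-1}}$ and an analytic symplectic transformation $\Phi_m$, $\gamma^{-1}\rho$-close to the identity, such that
\[ H\circ\Phi_m(\theta,I,z)=h_m(J)+f_m(\theta,I,z), \]
with $|f_m|\MP \rho\exp(-(\gamma\rho^{-1})^a)$ and $a=(\tau+1)^{-1}$. Setting $\varepsilon=\rho\exp(-(\gamma\rho^{-1})^a)$, the transformed Hamiltonian $H_m=H\circ\Phi_m$ is an $\varepsilon$-perturbation of the integrable system $h_m(J)$, which by $(G_m)$ lies in $SDM_{\gamma'}^{\tau'}$.

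The real work is to establish a mixed-coordinate analogue of Theorem~\ref{N1}: if $H=h(J)+f$ on $\mathcal{D}_s$ with $h\in SDM_{\gamma'}^{\tau'}$ and $|f|_s<\varepsilon$ small enough, then every solution with $|J(0)|<1$ satisfies $|J(t)-J(0)|\MP\varepsilon^{b'}$ for $|t|<\exp(\varepsilon^{-a'})$, with $a'=b'=3^{-1}(2(k+l+1)\tau')^{-(k+l)}$. To prove this I would recycle both parts of~\cite{BN09}. The geometric part only sees $h$ through the image of its gradient in $\R^{k+l}$, and uses nothing but the $SDM$ property of $h$ there, so it carries over unchanged. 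The analytic part consists in building resonant normal forms by averaging along the periodic flow generated by linear Hamiltonians $l_j(J)=\omega_j\cdot J$, on domains $\mathcal{D}_{r_j,s_j}(\omega_j)$ which now combine a complex strip in $\theta$, a complex ball in $I$, and a complex ball in $z$. Exactly as in Section~\ref{sNE22}, the key observation is that the flow of $l_j$ acts intrinsically, by translation on the angle variables and by rotation in the symplectic planes $(z_i,z_{l+i})$, so averaging preserves analyticity on the natural mixed domains and the Lie-series estimates together with the induction on $j\in\{1,\dots,k+l\}$ of the proposition in Section~\ref{sNE22} go through verbatim.

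Once these two ingredients are in hand, the conclusion is a routine copy of the proof of Theorem~\ref{thelli}: inserting $\varepsilon=\rho\exp(-(\gamma\rho^{-1})^a)$ into the mixed Nekhoroshev estimate yields
\[ |J_m(t)-J_m(0)|\MP \rho^{b'}\exp(-b'(\gamma\rho^{-1})^a),\quad |t|<\exp\bigl(\rho^{-a'}\exp(a'(\gamma\rho^{-1})^a)\bigr), \]
and since $\Phi_m$ is $\gamma^{-1}\rho$-close to the identity with image containing $\mathcal{D}_{s/2}$, one recovers $|J(t)-J(0)|<c_1\rho$ on the same super-exponential time scale in the original coordinates. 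The main obstacle, as I see it, is not any single estimate but the bookkeeping of the mixed domain geometry in the Nekhoroshev step: one must verify that the frequency-space covering of~\cite{BN09} is compatible with the three-factor domains $\mathcal{D}_{r_j,s_j}(\omega_j)$, that the assumptions $(A_j)$ can be chosen simultaneously in the $\theta$, $I$ and $\tilde{I}$ directions, and that the control of $\tilde{I}(t)$ obtained from the formal-action part still propagates through the usual confinement-by-energy-conservation argument. This is exactly what is meant by the ``obvious modifications of our method'' alluded to before the statement.
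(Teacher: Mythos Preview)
Your proposal is correct and follows exactly the approach the paper itself indicates: the paper does not spell out a proof of Theorem~\ref{thellilag} but simply asserts that Birkhoff's exponential estimates from~\cite{JV97} combined with ``obvious modifications'' of the Nekhoroshev argument of~\cite{BN09} to handle mixed $(I,\tilde I)$ coordinates yield the result, and your write-up makes precisely this explicit. If anything, you have supplied more detail than the paper does, and your identification of the bookkeeping issues in the mixed-domain Nekhoroshev step is an accurate reading of what those ``obvious modifications'' entail.
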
 

Once again, the condition on $h_\infty$ and the values of the exponents are the same.

\paraga Let us add that one could easily give similar estimates in the discrete case, that is for exact symplectic diffeomorphisms near an elliptic fixed point, an invariant Lagrangian torus or an invariant linearly stable isotropic reducible torus. Even if one has the possibility to re-write the proof in these settings, the easiest way is to use suspension arguments, as it is done qualitatively in~\cite{Dou88} or quantitatively in \cite{KP94} (see also \cite{PT97} for a different approach) and deduce stability results in the discrete case from the corresponding results in the continuous case.  

To conclude, let us mention that important examples of invariant tori satisfying our assumptions (linearly stable, reducible, isotropic) are those given by KAM theory. However, the latter not only gives individual tori but a whole Cantor family (see \cite{Pos01} or \cite{AKN97}). In this context, Popov has proved exponential stability estimates for the family of Lagrangian KAM tori, if the Hamiltonian is analytic or Gevrey (\cite{Pop00} and \cite{Pop04}). His proof relies on a KAM theorem with Gevrey smoothness on the parameters (in the sense of Whitney) and some kind of simultaneous Birkhoff normal form over the Cantor set of tori. We believe that our method should be useful in trying to extend those results to obtain super-exponential stability under generic conditions. But clearly this is a more difficult problem, and the first step is to obtain Nekhoroshev's estimates in Gevrey regularity for a generic integrable Hamiltonian, the quasi-convex case having been settled in~\cite{MS02}.

\appendix

\section{Generic assumptions} \label{generic}

In this appendix, we will show that our assumption $(G)$ is generic, in the sense that it defines a prevalent set in the infinite dimensional space of formal power series.  

\paraga Let us first recall the definition of Simultaneous Diophantine Morse functions (SDM in the following). Let $G(n,k)$ be the set of all vector subspaces of $\R^n$ of dimension $k$. We equip $\R^n$ with the Euclidean scalar product, and given an integer $L\in\N^*$, we define $G^{L}(n,k)$ as the subset of $G(n,k)$ consisting of subspaces whose orthogonal complement can be generated by integer vectors with components bounded by $L$. In the sequel, $B$ will be an arbitrary open ball of $\R^n$.   

\begin{definition} \label{sdm}
A smooth function $h: B \rightarrow \R$ is said to be SDM if there exist $\gamma'>0$ and $\tau' \geq 0$ such that for any $L\in\N^*$, any $k\in\{1,\dots,n\}$ and any $\Lambda\in G^{L}(n,k)$, there exists $\left(e_1,\dots,e_k\right)$ (resp. $\left(f_1,\ldots,f_{n-k}\right)$), an orthonormal basis of $\Lambda$ (resp. of $\Lambda^\perp$), such that the function $h_\Lambda$ defined on $B$ by 
\[ h_\Lambda(\alpha,\beta)=h\left(\alpha_1 e_1+\dots+\alpha_k e_k+\beta_1f_1+\dots+\beta_{n-k} f_{n-k}\right) \]
satisfies the following: for any $(\alpha,\beta) \in B$, 
\[ \Vert\partial_\alpha h_\Lambda(\alpha,\beta)\Vert \leq \gamma' L^{-\tau'} \Longrightarrow \Vert \partial_{\alpha\alpha} h_\Lambda(\alpha,\beta).\eta\Vert>\gamma' L^{-\tau'}\Vert\eta\Vert, \]
for any $\eta \in \R^k\setminus\{0\}$.
\end{definition} 

This definition is inspired by the steepness condition of Nekhoroshev and the quantitative Morse-Sard theory of Yomdin (see \cite{BN09} for more explanations). It depends on a choice of coordinates adapted to the orthogonal decomposition $\Lambda\oplus\Lambda^{\perp}$, so for $\Lambda\in G^{L}(n,k)$ and $(\alpha,\beta)\in B$, $\partial_\alpha h_\Lambda(\alpha,\beta)$ is a vector in $\R^k$ and $\partial_{\alpha\alpha} h_\Lambda(\alpha,\beta)$ is a symmetric matrix of size $k$ with real entries. 

\begin{remark}\label{remsdm}
Note also that the definition can be stated as the following alternative: for any $(\alpha,\beta) \in B$, either we have $\Vert\partial_\alpha h_\Lambda(\alpha,\beta)\Vert>\gamma L^{-\tau}$ or $\Vert\partial_{\alpha\alpha} h_\Lambda(\alpha,\beta).\eta\Vert>\gamma L^{-\tau}\Vert\eta\Vert$ for any $\eta \in \R^k\setminus\{0\}$. Hence for a given function it is sufficient to verify that $\Vert \partial_{\alpha\alpha} h_\Lambda(\alpha,\beta).\eta\Vert>\gamma L^{-\tau}\Vert\eta\Vert$ for any $\eta \in \R^k\setminus\{0\}$, and we will use this fact later (in Theorem~\ref{genericquadratic}).
\end{remark}

\paraga The set of SDM functions on $B$ with respect to $\gamma'>0$ and $\tau' \geq 0$ will be denoted by $SDM_{\gamma'}^{\tau'}(B)$, and we will also use the notation 
\[ SDM^{\tau'}(B)=\bigcup_{\gamma'>0}SDM_{\gamma'}^{\tau'}(B). \]
The following theorem was proved in \cite{BN09}, and it relies on non trivial results from quantitative Morse-Sard theory (\cite{Yom83},\cite{YC04}).

\begin{proposition}[\cite{BN09}]\label{propreva}
Let $\tau > 2(n^2+1)$, and $h\in C^{2n+2}(B)$. Then for Lebesgue almost all $\xi\in\R^n$, the function $h_\xi$, defined by $h_\xi(I)=h(I)-\xi.I$ for $I\in B$, belongs to $SDM^{\tau'}(B)$.      
\end{proposition}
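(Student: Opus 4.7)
My plan is to show that the bad set $\mathcal{B} = \{\xi \in \R^n : h_\xi \notin SDM^{\tau'}(B)\}$ has Lebesgue measure zero. Writing $SDM^{\tau'}(B) = \bigcup_{j\geq 1} SDM_{1/j}^{\tau'}(B)$, I would decompose $\mathcal{B} = \bigcap_{j \geq 1} \mathcal{B}_{1/j}$, where $\mathcal{B}_{\gamma'}$ denotes the complement of $SDM_{\gamma'}^{\tau'}(B)$; since these sets are decreasing as $\gamma' \to 0$, after exhausting $\R^n$ by compact cubes $K$ it suffices to prove that $\mu(\mathcal{B}_{\gamma'} \cap K) \to 0$ as $\gamma' \to 0$.

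The crucial observation is that since $h_\xi$ is affine in $\xi$, we have $(h_\xi)_\Lambda(\alpha, \beta) = h_\Lambda(\alpha, \beta) - \xi_\Lambda \cdot \alpha - \xi_{\Lambda^\perp} \cdot \beta$, where $\xi_\Lambda,\xi_{\Lambda^\perp}$ are the orthogonal projections of $\xi$. Consequently $\partial_{\alpha\alpha}(h_\xi)_\Lambda \equiv \partial_{\alpha\alpha} h_\Lambda$ is independent of $\xi$, while $\partial_\alpha(h_\xi)_\Lambda = \partial_\alpha h_\Lambda - \xi_\Lambda$; and since the norm of the gradient and the singular values of the Hessian are invariant under orthogonal changes of basis, I can fix any orthonormal basis of $\Lambda$. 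By the alternative form in Remark~\ref{remsdm}, $\xi \in \mathcal{B}_{\gamma'}$ precisely when there exist $L \in \N^*$, $k \in \{1, \ldots, n\}$, $\Lambda \in G^L(n,k)$ and $(\alpha, \beta) \in B$ such that
\[
\|\partial_\alpha h_\Lambda(\alpha, \beta) - \xi_\Lambda\| \leq \gamma' L^{-\tau'} \quad \text{and} \quad \sigma_{\min}\bigl(\partial_{\alpha\alpha} h_\Lambda(\alpha, \beta)\bigr) \leq \gamma' L^{-\tau'}.
\]

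For each fixed $(L, k, \Lambda)$, this forces $\xi_\Lambda$ into the $\epsilon$-neighborhood (with $\epsilon = \gamma' L^{-\tau'}$) of the near-critical value set
\[
\Sigma_\epsilon = \bigl\{\partial_\alpha h_\Lambda(\alpha, \beta) : (\alpha, \beta) \in B,\; \sigma_{\min}(\partial_{\alpha\alpha} h_\Lambda(\alpha, \beta)) \leq \epsilon\bigr\} \subset \R^k.
\]
To bound $\mu_k(N_\epsilon(\Sigma_\epsilon))$ I would invoke Yomdin's quantitative Morse--Sard theorem \cite{Yom83,YC04}, applied slicewise to the map $\alpha \mapsto \partial_\alpha h_\Lambda(\alpha, \beta)$ (a $C^{2n+1}$ self-map of $\R^k$, which is well-defined since $h \in C^{2n+2}$) and then integrated in the transverse direction $\beta$, yielding a uniform estimate $\mu_k(N_\epsilon(\Sigma_\epsilon)) \leq C\epsilon^p$ with an exponent $p=p(n,k)$ and constant $C$ depending on $h,B,K$ but not on $\Lambda$ or $L$. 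Lifting back to $\R^n$ and intersecting with $K$ then gives $\mu(\mathcal{B}_{L,k,\Lambda}(\gamma') \cap K) \leq C_K (\gamma' L^{-\tau'})^p$.

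Finally, using the cardinality bound $|G^L(n,k)| \leq (2L+1)^{n(n-k)}$ (each $\Lambda \in G^L(n,k)$ being determined by $n-k$ integer vectors of sup-norm $\leq L$ spanning $\Lambda^\perp$), summing yields
\[
\mu(\mathcal{B}_{\gamma'} \cap K) \leq C_K\, \gamma'^p \sum_{k=1}^{n}\sum_{L\geq 1} L^{n(n-k) - \tau' p},
\]
which converges and tends to $0$ as $\gamma' \to 0$ provided $\tau' p > n(n-1) + 1$ for every $k$; tracking Yomdin's exponent carefully yields the sufficient condition $\tau' > 2(n^2+1)$. The main obstacle I expect will be the Morse--Sard step itself: the SDM condition controls only the $k \times k$ submatrix $\partial_{\alpha\alpha} h_\Lambda$ of the full differential of the gradient map, so the standard form of Yomdin's theorem does not apply directly, and one must combine a slicewise application with careful volumetric bookkeeping over the transverse $\beta$-variable, which is exactly the adaptation of Yomdin--Comte theory carried out in \cite{BN09} and the reason behind the $C^{2n+2}$ regularity hypothesis.
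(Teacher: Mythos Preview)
The paper does not actually prove this proposition; it merely cites it from \cite{BN09} and remarks that the proof ``relies on non trivial results from quantitative Morse--Sard theory (\cite{Yom83},\cite{YC04})''. There is therefore no proof in the paper to compare your attempt against.

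That said, your sketch correctly identifies every ingredient the paper alludes to: the reduction via $SDM^{\tau'}=\bigcup_{\gamma'>0}SDM_{\gamma'}^{\tau'}$ to a measure estimate for each $\mathcal{B}_{\gamma'}$; the key observation that $\partial_{\alpha\alpha}(h_\xi)_\Lambda$ is independent of $\xi$ while $\partial_\alpha(h_\xi)_\Lambda=\partial_\alpha h_\Lambda-\xi_\Lambda$, so that the bad set for fixed $(L,k,\Lambda)$ is an $\epsilon$-tube around a near-critical-value set; the appeal to Yomdin's quantitative Morse--Sard theory to bound that tube; the cardinality estimate for $G^L(n,k)$; and the final summation. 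Your remark that norms and singular values are orthogonally invariant, so the basis quantifier in Definition~\ref{sdm} is harmless, is also correct. This is indeed the architecture of the argument in \cite{BN09}, and your closing paragraph accurately locates the real work --- the slicewise adaptation of Yomdin--Comte needed because only the $k\times k$ block $\partial_{\alpha\alpha}h_\Lambda$ is controlled, which is what forces the $C^{2n+2}$ regularity. Without reproducing \cite{BN09} one cannot verify the exact exponent arithmetic yielding $\tau'>2(n^2+1)$, but nothing in your outline is mistaken.
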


Now let us recall the definition of a prevalent set (\cite{HSY92}, see also \cite{OY05}). 

\begin{definition}
Let $E$ be a completely metrizable topological vector space. A Borel subset $S \subseteq E$ is said to be shy if there exists a Borel measure $\mu$ on $E$, with $0<\mu(C)<\infty$ for some compact set $C\subseteq E$, such that $\mu(x+S)=0$ for all $x\in E$. 

An arbitrary set is called shy if it is contained in a shy Borel subset, and finally the complement of a shy set is called prevalent.
\end{definition}  

For a finite dimensional vector space $E$, by an easy application of Fubini theorem, prevalence is equivalent to full Lebesgue measure. The following ``genericity" properties can be checked (\cite{OY05}): a prevalent set is dense, a set containing a prevalent set is also prevalent, and prevalent sets are stable under translation and countable intersection. Furthermore, we have an easy but useful criterion for a set to be prevalent.

\begin{proposition}[\cite{HSY92}]\label{proppreva}
Let $A$ be a Borel subset of $E$. Suppose there exists a finite dimensional subspace $F$ of $E$ such that, denoting $\lambda_F$ the Lebesgue measure supported on $F$, the set $x+A$ has full $\lambda_{F}$-measure for all $x\in E$. Then $A$ is prevalent.
\end{proposition}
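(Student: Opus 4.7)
The plan is to produce a compactly supported Borel measure $\mu$ on $E$ that witnesses $S := E \setminus A$ being shy, in the sense of the definition immediately preceding the statement. The hypothesis singles out the finite-dimensional subspace $F$ as the source of translations killing $S$, so $\mu$ should be manufactured from $\lambda_F$; the only small issue is that $\lambda_F$ itself is typically not a finite measure on $E$, so some restriction is needed.

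To construct $\mu$, I would choose a compact set $K \subset F$ with $0 < \lambda_F(K) < \infty$—for instance the closed unit ball of $F$ with respect to some norm, which is compact because $\dim F < \infty$—and set $\mu := \lambda_F|_K$, extended by zero to all of $E$ (and to all Borel subsets of $E$ by $\mu(B) := \lambda_F(B \cap K)$). This $\mu$ is a finite Borel measure on $E$, supported on the compact set $K$, and satisfies $0 < \mu(K) < \infty$, so it meets the compactness/finiteness requirement in the definition of shyness.

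The remaining task is to verify $\mu(x+S) = 0$ for every $x \in E$. Since translations are homeomorphisms of $E$, $x+S$ is Borel. The hypothesis that $x+A$ has full $\lambda_F$-measure unpacks, given that $\lambda_F$ is typically infinite, as $\lambda_F\bigl(E \setminus (x+A)\bigr) = \lambda_F(x+S) = 0$. Therefore
\[ \mu(x+S) = \lambda_F\bigl((x+S) \cap K\bigr) \leq \lambda_F(x+S) = 0. \]
This shows $S$ is shy, so $A$ is prevalent. The whole argument is essentially a direct unpacking of the definitions of prevalence and shyness, with the single conceptual step being the passage from the $\sigma$-finite Lebesgue measure $\lambda_F$ to a compactly supported finite restriction; I do not expect any genuine obstacle here.
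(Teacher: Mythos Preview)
Your argument is correct and is the standard one: restrict $\lambda_F$ to a compact set of positive finite measure to get the witnessing measure $\mu$, then use the full-measure hypothesis to conclude $\mu(x+S)=0$ for the complement $S=E\setminus A$. The paper itself does not prove this proposition; it is simply quoted from \cite{HSY92}, so there is nothing to compare against beyond noting that your proof matches the usual treatment in that reference.
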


It is an obvious consequence of Proposition~\ref{propreva} and Proposition~\ref{proppreva} that $SDM^{\tau'}(B)$ is prevalent in $C^{2n+2}(B)$ for $\tau' > 2(n^2+1)$.

\paraga Now let $\mathcal{P}_\infty=\R[[X_1,\dots,X_n]]$ be the space of all formal power series in $n$ variables with real coefficients. It is naturally a Fréchet space, as the projective limit of the finite dimensional spaces $\mathcal{P}_m$ consisting of polynomials in $n$ variables of degree less than or equal to $m$. We define the subset 
\[ \mathcal{S}_{\infty}^{\tau'}=\{ h_\infty \in\mathcal{P}_\infty \; | \; h_m \in SDM^{\tau'}(B),\; \forall m\geq 2  \}, \]
where $h_m=\sum_{k=1}^{m}h^k$ if $h_\infty=\sum_{k\geq 1}h^k$, and we identify the polynomial $h_m$ with the associated function defined on $B$. Let us also define
\[ \mathcal{D}_{\infty}^{\tau}=\{ h_\infty \in\mathcal{P}_\infty \; | \; h_1(X)=\alpha.X, \; \alpha\in \mathcal{D}^{\tau}\}, \]
where $\mathcal{D}^{\tau}$ is the set of Diophantine vectors of $\R^n$ with exponent $\tau$, and finally
\[ \mathcal{G}_{\infty}^{\tau,\tau'}=\mathcal{D}_{\infty}^{\tau}\cap\mathcal{S}_{\infty}^{\tau'}.  \]
The set $\mathcal{G}_{\infty}^{\tau,\tau'}$ is the set of formal power series for which condition $(G)$ holds.

\begin{theorem}\label{genericseries}
For $\tau>n-1$ and $\tau' >2(n^2+1)$, the set $\mathcal{G}_{\infty}^{\tau,\tau'}$ is prevalent in $\mathcal{P}_\infty$.  
\end{theorem}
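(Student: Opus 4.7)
The plan is to write
\[ \mathcal{G}_\infty^{\tau,\tau'}=\mathcal{D}_\infty^{\tau}\cap\bigcap_{m\geq 2}\mathcal{S}_\infty^{\tau',m},\qquad \mathcal{S}_\infty^{\tau',m}:=\{h_\infty\in\mathcal{P}_\infty\,|\,h_m\in SDM^{\tau'}(B)\}, \]
prove prevalence of each factor separately, and then invoke the stability of prevalent sets under countable intersection. In every case the probing finite-dimensional subspace will be the same $n$-dimensional space $F\subset\mathcal{P}_\infty$ of linear forms, parametrized by $\xi\in\R^n\leftrightarrow\xi\cdot X$, so that translating $h_\infty$ by an element of $F$ alters only its linear coefficient.

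For $\mathcal{D}_\infty^\tau$, given an arbitrary $h_\infty\in\mathcal{P}_\infty$ whose linear part is $\beta\cdot X$, the translate $h_\infty+\xi\cdot X$ has linear part $(\beta+\xi)\cdot X$ and therefore lies in $\mathcal{D}_\infty^\tau$ if and only if $\beta+\xi\in\mathcal{D}^\tau$. The classical Diophantine measure estimate ensures that $\mathcal{D}^\tau$ has full Lebesgue measure in $\R^n$ whenever $\tau>n-1$, and translation invariance of Lebesgue measure yields the same for $\mathcal{D}^\tau-\beta$. Applying Proposition~\ref{proppreva} with the probe $F$ then gives prevalence of $\mathcal{D}_\infty^\tau$.

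For each $m\geq 2$, the factor $\mathcal{S}_\infty^{\tau',m}$ depends only on the jet $h_m\in\mathcal{P}_m\subset C^{2n+2}(B)$, and the jet of the translate satisfies $(h_\infty+\xi\cdot X)_m=h_m+\xi\cdot X$. Proposition~\ref{propreva} applied to $h_m$, combined with the invariance of Lebesgue measure under $\xi\mapsto -\xi$, asserts that for Lebesgue almost every $\xi\in\R^n$ one has $h_m+\xi\cdot X\in SDM^{\tau'}(B)$, provided $\tau'>2(n^2+1)$. Hence $\{\xi\in\R^n\mid h_\infty+\xi\cdot X\in\mathcal{S}_\infty^{\tau',m}\}$ has full Lebesgue measure for every $h_\infty$, and Proposition~\ref{proppreva} again yields prevalence of $\mathcal{S}_\infty^{\tau',m}$.

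Combining these two points and the countable stability of prevalence produces the theorem. The only genuinely delicate preliminary step is to verify that $\mathcal{D}_\infty^\tau$ and each $\mathcal{S}_\infty^{\tau',m}$ are Borel subsets of $\mathcal{P}_\infty$, as required by the definition of prevalence; this is where I expect the main obstacle. Since both sets only depend on finitely many Taylor coefficients, the issue reduces to Borel measurability in a finite-dimensional space. For $\mathcal{D}_\infty^\tau$ this is immediate from the standard description of $\mathcal{D}^\tau$ as a countable intersection of open sets. For $\mathcal{S}_\infty^{\tau',m}$, writing $SDM^{\tau'}(B)=\bigcup_{q\in\Q_+^*}SDM_q^{\tau'}(B)$ reduces the question to checking that each $SDM_q^{\tau'}(B)\cap\mathcal{P}_m$ is Borel, which follows from countability of the index set $\{(L,k,\Lambda)\mid \Lambda\in G^L(n,k)\}$ together with the fact that the defining condition is, at each $(\alpha,\beta)\in B$, a continuous inequality in the coefficients of $h_m$; strict-versus-non-strict inequalities can then be handled by a further countable union over rational thresholds. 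Once this measurability is in hand, the two prevalence statements fall directly out of Propositions~\ref{propreva} and \ref{proppreva}, and the theorem follows.
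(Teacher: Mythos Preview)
Your proof is correct and follows essentially the same route as the paper: the same decomposition $\mathcal{G}_\infty^{\tau,\tau'}=\mathcal{D}_\infty^\tau\cap\bigcap_{m\geq 2}\mathcal{S}_{\infty,m}^{\tau'}$, the same probing subspace $F=\mathcal{P}_1$ of linear forms, and the same appeals to Propositions~\ref{propreva} and~\ref{proppreva} together with countable stability of prevalence. Your discussion of Borel measurability is an extra verification that the paper's proof tacitly assumes but does not spell out.
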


\begin{proof}
As the intersection of two prevalent sets is prevalent, it is enough to prove that both sets $\mathcal{D}_{\infty}^{\tau}$, for $\tau>n-1$, and $\mathcal{S}_{\infty}^{\tau'}$, for $\tau' >2(n^2+1)$, are prevalent.

For the set $\mathcal{D}_{\infty}^{\tau}$, this is an easy consequence of the fact that $\mathcal{D}^{\tau}$ is of full Lebesgue measure in $\R^n$, for $\tau>n-1$, and Proposition~\ref{proppreva} with $F=\mathcal{P}_1$, the space of linear forms. For the set $\mathcal{S}_{\infty}^{\tau'}$, first note that we can write
\[ \mathcal{S}_{\infty}^{\tau'}=\bigcap_{m\geq 2}\mathcal{S}_{\infty,m}^{\tau'}, \]
where, for an integer $m\geq 2$,
\[ \mathcal{S}_{\infty,m}^{\tau'}=\{ h_\infty \in\mathcal{P}_\infty \; | \; h_m \in SDM^{\tau'}(B) \}. \]
As a countable intersection of prevalent sets is prevalent, it is enough to prove that for each $m\geq 2$, the set $\mathcal{S}_{\infty,m}^{\tau'}$ is prevalent in $\mathcal{P}_\infty$. But once again this is just a consequence of Proposition~\ref{propreva} and Proposition~\ref{proppreva} with $F=\mathcal{P}_1$ the space of linear forms.        
\end{proof}

For $m\geq 2$, the set of polynomials $h_m$ for which condition $(G_m)$ is satisfied is given by
\[ \mathcal{S}_{m}^{\tau'}=\{ h_m \in\mathcal{P}_m \; | \; h_m \in SDM^{\tau'}(B) \}, \]
and the proof of the above theorem immediately gives the following result.

\begin{theorem}
For $\tau'>2(n^2+1)$, the set $\mathcal{S}_{m}^{\tau'}$ is of full Lebesgue measure in $\mathcal{P}_m$.  
\end{theorem}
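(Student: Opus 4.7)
The plan is to extract the single-degree content of the proof of Theorem~\ref{genericseries} and rephrase it inside the finite dimensional space $\mathcal{P}_m$. Since $\mathcal{P}_m$ has finite dimension, prevalence coincides with full Lebesgue measure (by Fubini, as pointed out in the paper just after the definition of prevalence), so it is enough to establish that $\mathcal{S}_{m}^{\tau'}$ is prevalent in $\mathcal{P}_m$. I would then invoke the criterion of Proposition~\ref{proppreva} with the probe subspace $F=\mathcal{P}_1\subset\mathcal{P}_m$, the $n$-dimensional space of linear forms, exactly as in the proof of Theorem~\ref{genericseries}.

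Concretely, for an arbitrary $h\in\mathcal{P}_m$, I need to check that the affine slice $h+\mathcal{P}_1$ meets $\mathcal{S}_{m}^{\tau'}$ on a set of full Lebesgue measure when parametrized by $\xi\in\R^n$ via $\xi\mapsto h(I)-\xi.I$. But this is exactly Proposition~\ref{propreva} applied to the polynomial $h$ (which is trivially of class $C^{2n+2}$): for $\tau'>2(n^2+1)$, Lebesgue-almost every $\xi\in\R^n$ makes $h-\xi.I$ belong to $SDM^{\tau'}(B)$. Hence $h+\mathcal{S}_{m}^{\tau'}$ has full $\lambda_F$-measure for every $h\in\mathcal{P}_m$, and Proposition~\ref{proppreva} yields prevalence, hence full Lebesgue measure in $\mathcal{P}_m$.

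There is no real obstacle in this argument, since the main quantitative content has been packaged once and for all into Proposition~\ref{propreva}. The only point that strictly requires a brief verification is that $\mathcal{S}_{m}^{\tau'}$ is a Borel subset of $\mathcal{P}_m$, as required to apply Proposition~\ref{proppreva}; this follows because the SDM condition can be written as a countable union over rational $\gamma'>0$ of a countable intersection over $L\in\N^*$, $k\in\{1,\dots,n\}$ and $\Lambda\in G^{L}(n,k)$ (a finite set for each $L$ and $k$) of closed conditions on the coefficients of the polynomial, uniform on the ball $B$. Once this measurability issue is dispatched, the statement follows immediately.
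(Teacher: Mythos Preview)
Your proposal is correct and follows exactly the route the paper indicates: the paper itself says this theorem follows immediately from the proof of Theorem~\ref{genericseries}, namely by applying Proposition~\ref{proppreva} with the probe space $F=\mathcal{P}_1$ together with Proposition~\ref{propreva}, and then identifying prevalence with full Lebesgue measure in the finite dimensional space $\mathcal{P}_m$. Your additional remark on Borel measurability of $\mathcal{S}_m^{\tau'}$ is a welcome clarification that the paper leaves implicit.
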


\paraga Now in the special case $m=2$, we can state a refined result which is due to Niederman (\cite{NieU}).

\begin{theorem}\label{genericquadratic}
For Lebesgue almost all $\beta\in S_n(\R)$, the function 
\[h(I)=\alpha.I+\beta I.I\] 
belongs to $SDM^{\tau'}(B)$ provided $\tau'>n^2+1$. 
\end{theorem}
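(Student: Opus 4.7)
The function $h(I)=\alpha\cdot I + \beta I \cdot I$ is quadratic, so its Hessian $\partial_{II}h\equiv 2\beta$ is constant. Consequently, for any $\Lambda\in G^L(n,k)$ with orthonormal basis $E=(e_1,\dots,e_k)$, the restricted Hessian $\partial_{\alpha\alpha}h_\Lambda(\alpha,\beta)$ is the constant symmetric $k\times k$ matrix $\beta_\Lambda := 2E^{T}\beta E$. By Remark~\ref{remsdm}, in order to show $h\in SDM^{\tau'}_{\gamma'}(B)$ it is enough to exhibit some $\gamma'>0$ such that
\[ \sigma_{\min}(\beta_\Lambda) > \gamma' L^{-\tau'} \]
uniformly over all triples $(L,k,\Lambda)$ with $L\in\N^*$, $1\leq k\leq n$ and $\Lambda\in G^L(n,k)$. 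The plan is a Borel--Cantelli argument on this countable index set.

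For fixed $\Lambda$, the map $\pi_\Lambda:S_n(\R)\to S_k(\R)$, $\beta\mapsto E^{T}\beta E$, is a linear surjection, and on the Frobenius-orthogonal complement of its kernel it is an isometry (a direct computation using $E^{T}E=I_k$). Slicing $S_n(\R)$ along the kernel of $\pi_\Lambda$, one sees that for any ambient ball $B_R\subset S_n(\R)$ the image of Lebesgue measure on $B_R\cap S_n(\R)$ under $\pi_\Lambda$ is dominated by a constant multiple (depending only on $n,k,R$) of Lebesgue measure on a bounded ball in $S_k(\R)$. The standard estimate
\[ \lambda_{S_k}\bigl(\{M\in S_k(\R):\ \|M\|\leq C,\ \sigma_{\min}(M)\leq\epsilon\}\bigr) \leq C'_{n,k}\,\epsilon, \]
valid because $\sigma_{\min}(M)\leq\epsilon$ implies $|\det M|\leq\epsilon\|M\|^{k-1}$ and because the polynomial $\det$ has sublevel sets $\{|\det|\leq\delta\}$ of measure $O(\delta)$ in any bounded region, then gives
\[ \lambda\bigl(\{\beta\in B_R : \sigma_{\min}(\beta_\Lambda)\leq\gamma' L^{-\tau'}\}\bigr) \leq C_{n,R}\,\gamma' L^{-\tau'}. \]
The bound is uniform in the choice of orthonormal basis $E$ of $\Lambda$, since two such bases differ by an orthogonal conjugation, which preserves both singular values and Lebesgue measure on $S_k(\R)$.

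Since $\Lambda^{\perp}$ is generated by $n-k$ integer vectors of sup-norm at most $L$, we have the elementary cardinality bound $|G^L(n,k)|\leq (2L+1)^{n(n-k)}$. Summing the previous estimate over $\Lambda\in G^L(n,k)$ and $k\in\{1,\dots,n\}$, the measure of $\beta\in B_R$ for which the Hessian condition fails at some $\Lambda$ of level $L$ is at most
\[ C'_{n,R}\,\gamma'\,L^{n(n-1)-\tau'}, \]
the worst $k$ being $k=1$. Under the hypothesis $\tau'>n^{2}+1$ (in fact $\tau'>n(n-1)+1$ already suffices) this is summable in $L$, so by the Borel--Cantelli lemma, for Lebesgue-almost every $\beta\in B_R$ the condition fails for only finitely many $L$; shrinking $\gamma'$ then absorbs those finitely many exceptions. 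Letting $B_R\nearrow S_n(\R)$ and noting that a countable union of null sets is null yields the full-measure conclusion.

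The main technical point, mild as it is, is the uniformity in $\Lambda$ of the singular-value measure estimate; the orthogonal invariance above takes care of it. Everything else is a standard counting argument, greatly simplified by the fact that $h$ is quadratic so that $\partial_{\alpha\alpha}h_\Lambda$ is constant --- this bypasses the Yomdin-type quantitative Morse--Sard machinery that is needed to treat the general polynomial case in Proposition~\ref{propreva}.
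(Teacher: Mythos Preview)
Your overall strategy is sound and reaches the same conclusion, but it differs from the paper's argument. The paper does not project to $S_k(\R)$ or estimate the small-singular-value set there; instead it fixes an arbitrary $\beta\in S_n(\R)$ and slices along the single line $\xi\mapsto\beta-\xi I_n$. For each $(L,k,\Lambda)$ the elementary Lemma~\ref{lemmequa} (the eigenvalues of $\beta_\Lambda-\xi I_k$ are just the eigenvalues of $\beta_\Lambda$ shifted by $\xi$, so the bad $\xi$'s form $k$ intervals of length $2\kappa$) gives a one-dimensional bad set of measure $\le 2k\gamma'L^{-\tau'}$; one then sums over the countable index set and applies Fubini along that line. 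This bypasses both your projection $\pi_\Lambda$ and the need for any measure estimate in $S_k(\R)$.

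One genuine technical gap in your write-up: the ``standard estimate'' is justified via $\sigma_{\min}(M)\le\epsilon\Rightarrow|\det M|\le\epsilon\|M\|^{k-1}$ together with the claim that $\{|\det|\le\delta\}$ has measure $O(\delta)$ in a bounded region of $S_k(\R)$. That last claim is false as stated: already for $k=2$ the set $\{|ac-b^2|\le\delta\}\cap[-C,C]^3$ has measure of order $\delta\log(1/\delta)$, and in general one picks up powers of $\log(1/\delta)$ from the low-rank strata. The conclusion you actually need, $\lambda\{\sigma_{\min}\le\epsilon\}=O(\epsilon)$, \emph{is} correct, but the clean proof is to slice $S_k(\R)$ along the $I_k$-direction and apply exactly the paper's Lemma~\ref{lemmequa} on each line via Fubini; in that sense the paper's lemma is the natural tool also for your route. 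The log factor would in any case be harmless for the convergence, given the margin in $\tau'$.

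Your Borel--Cantelli finish is fine (the paper uses the equivalent ``bad measure $\le C\gamma'$, let $\gamma'\to 0$''), and your tighter count $|G^L(n,k)|\le(2L+1)^{n(n-k)}$ does yield the slightly better threshold $\tau'>n(n-1)+1$ that you note.
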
 

In the above theorem, there is no condition on $\alpha$, and contrary to Proposition~\ref{propreva}, the proof does not rely on Morse-Sard theory. Let us denote by $\lambda$ the one-dimensional Lebesgue measure and by $I_k$ the identity matrix of size $k$. We shall use the following elementary lemma. 

\begin{lemma}\label{lemmequa}
Let $k\in\{1,\dots,n\}$, $\beta_k\in S_k(\R)$ and $\kappa>0$. Then there exists a subset $\mathcal{C}_\kappa \subseteq \R$ such that
\[ \lambda(\mathcal{C}_\kappa) \leq 2k \kappa, \]
and for any $\xi\notin\mathcal{C}_\kappa$, the matrix $\beta_{k,\xi}=\beta_k-\xi I_k$ satisfies
\[ \Vert\beta_{k,\xi}.\eta\Vert>\kappa\Vert\eta\Vert, \]
for any $\eta\in\R^k\setminus\{0\}$.
\end{lemma}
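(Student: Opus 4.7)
The plan is to exploit the fact that $\beta_k$ is symmetric, hence orthogonally diagonalizable, and reduce the spectral condition on $\beta_{k,\xi}$ to an elementary one-dimensional measure estimate on the real line.

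First I would diagonalize: write $\beta_k = P D P^T$ with $P$ orthogonal and $D=\mathrm{diag}(\lambda_1,\dots,\lambda_k)$ where $\lambda_1,\dots,\lambda_k$ are the eigenvalues of $\beta_k$ (counted with multiplicity, and real since $\beta_k \in S_k(\R)$). Since $I_k$ commutes with $P$, we have
\[ \beta_{k,\xi}=\beta_k-\xi I_k = P(D-\xi I_k)P^T. \]
For an arbitrary $\eta\in\R^k\setminus\{0\}$, set $\tilde\eta=P^T\eta$; as $P$ is orthogonal, $\Vert\tilde\eta\Vert=\Vert\eta\Vert$, and
\[ \Vert\beta_{k,\xi}.\eta\Vert^2 = \Vert(D-\xi I_k)\tilde\eta\Vert^2=\sum_{i=1}^{k}(\lambda_i-\xi)^2\tilde\eta_i^{\,2}\geq \Bigl(\min_{1\leq i\leq k}|\lambda_i-\xi|\Bigr)^2\Vert\eta\Vert^2. \]

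Next I would define the exceptional set explicitly as the union of $k$ closed intervals centred at the eigenvalues:
\[ \mathcal{C}_\kappa=\bigcup_{i=1}^{k}[\lambda_i-\kappa,\lambda_i+\kappa]\subseteq\R. \]
Then subadditivity of $\lambda$ immediately yields $\lambda(\mathcal{C}_\kappa)\leq\sum_{i=1}^{k}2\kappa=2k\kappa$, which is the desired measure bound. For any $\xi\notin\mathcal{C}_\kappa$, one has $|\lambda_i-\xi|>\kappa$ for every $i\in\{1,\dots,k\}$, so $\min_i|\lambda_i-\xi|>\kappa$, and the inequality displayed above gives $\Vert\beta_{k,\xi}.\eta\Vert>\kappa\Vert\eta\Vert$ for every non-zero $\eta$.

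There is no real obstacle here: the only conceptual point is recognising that, for a symmetric matrix, the minimum of $\Vert M\eta\Vert/\Vert\eta\Vert$ over non-zero $\eta$ equals the smallest modulus of an eigenvalue, which is why an eigenvalue-based exclusion set suffices. The estimate $\lambda(\mathcal{C}_\kappa)\leq 2k\kappa$ is then just additivity of length over $k$ intervals of length $2\kappa$.
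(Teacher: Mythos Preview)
Your proof is correct and follows essentially the same approach as the paper: diagonalize $\beta_k$ in an orthonormal basis of eigenvectors, define $\mathcal{C}_\kappa=\bigcup_{i=1}^{k}[\lambda_i-\kappa,\lambda_i+\kappa]$, and observe that the measure bound and the spectral lower bound for $\xi\notin\mathcal{C}_\kappa$ are immediate. The paper's version is terser, but the argument is the same.
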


Of course, the set $\mathcal{C}_\kappa$ depends on the matrix $\beta_k$.

\begin{proof}
Let $\{\lambda_1,\dots,\lambda_k\}$ be the eigenvalues of $\beta_k$, then in an orthonormal basis of eigenvectors for $\beta_k$, the matrix $\beta_{k,\xi}$ is also diagonal, with eigenvalues $\{\lambda_1-\xi,\dots,\lambda_k-\xi\}$. Then one has $\Vert\beta_{k,\xi}.\eta\Vert>\kappa\Vert\eta\Vert$ for any $\eta\in\R^k\setminus\{0\}$ provided that for all $i\in\{1,\dots,k\}$, $|\lambda_i-\xi|>\kappa$, that is if $\xi$ does not belong to
\[ \mathcal{C}_\kappa=\bigcup_{i=1}^{k}[\lambda_i-\kappa,\lambda_i+\kappa]. \]
The measure estimate $\lambda(\mathcal{C}_\kappa) \leq 2k \kappa$ is trivial.  
\end{proof}

With this lemma, the proof is now similar to that of Proposition~\ref{propreva}.

\begin{proof}[Proof of Theorem~\ref{genericquadratic}]
Let $h(I)=\alpha.I+\beta I.I$, and given $\Lambda\in G^{L}(n,k)$, we denote by $\beta_\Lambda\in S_k(\R)$ the matrix which represents the quadratic form $\beta I.I$ restricted to the subspace $\Lambda$. Since the second derivative of $h$ along any subspace is constant, then coming back to definition~\ref{sdm} and using remark~\ref{remsdm}, $h\in SDM_{\gamma'}^{\tau'}$ if
\begin{equation}
\Vert\beta_\Lambda.\eta\Vert>\gamma'L^{-\tau'}\Vert\eta\Vert, \label{genqua}
\end{equation}  
for any $\Lambda\in G^{L}(n,k)$ and any $\eta\in\R^k\setminus\{0\}$. Let $A_{\gamma'}^{\tau'}$ be the subset of $S_n(\R)$ whose elements contradict condition~(\ref{genqua}), that is
\[ A_{\gamma'}^{\tau'}=\{\beta\in S_n(\R) \; | \; \Vert\beta_{\Lambda}.\eta\Vert\leq \gamma' L^{\tau'}\Vert\eta\Vert, \; \Lambda\in G^{L}(n,k), \; \eta\in\R^k\setminus\{0\} \}, \]
and 
\[ A^{\tau'}=\bigcap_{\gamma'>0}A_{\gamma'}^{\tau'}.\] 
What we need to show is that $A^{\tau'}$ has zero Lebesgue measure in $S_n(\R)$ provided $\tau'>n^2+1$. 

Apply lemma~\ref{lemmequa} to $\beta_\Lambda\in S_k(\R)$, with $\kappa=\gamma'L^{-\tau'}$, to have a subset $\mathcal{C}_{\gamma',\tau',L,\Lambda} \subseteq \R$ such that
\begin{equation}\label{estimqua}
\lambda(\mathcal{C}_{\gamma',\tau',L,\Lambda}) \leq 2k \gamma'L^{-\tau'},
\end{equation}
and for any $\xi\notin\mathcal{C}_{\gamma',\tau',L,\Lambda}$, the matrix $\beta_{\Lambda,\xi}=\beta_\Lambda-\xi I_k$ satisfies
\[ \Vert\beta_{\Lambda,\xi}.\eta\Vert>\gamma'L^{-\tau'}\Vert\eta\Vert \]
for any $\eta\in\R^k\setminus\{0\}$. If we define 
\[ \mathcal{C}_{\gamma',\tau'}=\bigcup_{L \in \N^*}\bigcup_{k\in\{1,\dots,n\}}\bigcup_{\Lambda \in G^L(n,k)}\mathcal{C}_{\gamma',\tau',L,\Lambda}, \]  
then
\[ \mathcal{C}_{\gamma',\tau'}=\{ \xi \in \R \; | \; \beta_\xi=\beta-\xi I_n \in A_{\gamma'}^{\tau'} \} \]
and so
\[ \mathcal{C}_{\tau'}=\bigcap_{\gamma'>0}\mathcal{C}_{\gamma',\tau'}=\{ \xi \in \R \; | \; \beta_\xi \in A^{\tau'} \}. \]
It remains to prove that the Lebesgue measure of $\mathcal{C}_{\tau'}$ is zero, since by Fubini theorem, this will imply that the Lebesgue measure of $A^{\tau'}$ is zero. By our estimate~(\ref{estimqua}), we have
\begin{eqnarray*}
\lambda(\mathcal{C}_{\gamma',\tau'})& \leq &\sum_{L\in\N^*}\sum_{k=1}^{n}|G^L(n,k)|2k \gamma'L^{-\tau'} \\
& \leq & \sum_{L\in\N^*}\sum_{k=1}^{n}L^{n^2}2k \gamma'L^{-\tau'} \\
& = & 2\left(\sum_{k=1}^{n}k\right)\left(\sum_{L\in\N^*}L^{n^2-\tau'}\right)\gamma' \\
& = & n(n+1)\left(\sum_{L\in\N^*}L^{n^2-\tau'}\right)\gamma'
\end{eqnarray*}  
and, since $\tau'>n^2+1$, the above series is convergent. Hence
\[ \lambda(\mathcal{C}_{\tau'})=\inf_{\gamma' >0}\lambda(\mathcal{C}_{\gamma',\tau'})=0. \]

\end{proof}
 
{\it Acknowledgments.} 

The author thanks Laurent Niederman for many useful and lengthy discussions on Nekhoroshev theory, Jacques Féjoz for a careful reading, Jean-Pierre Marco for his support and all the members of ASD team at ``Observatoire de Paris". The author is indebted to the referee for a very careful reading, and for pointing out some mistakes in a previous version.

\addcontentsline{toc}{section}{References}
\bibliographystyle{amsalpha}
\bibliography{superexp2}
\end{document}